\newtheorem{thm}{Theorem}[section]
\newtheorem{prop}[thm]{Proposition}
\newtheorem{lemma}[thm]{Lemma}
\newtheorem{cor}[thm]{Corollary}
\newtheorem{claim}[thm]{Claim}
\theoremstyle{definition}
\newtheorem{definition}[thm]{Definition}
\newtheorem{remark}[thm]{Remark}
\numberwithin{equation}{section}
\def\R{\mathbb{R}}
\def\S{\mathcal{S}}
\def\eps{\varepsilon}
\def\Lip{\operatorname{Lip}}
\def\Sym{\mathcal{S}_{s,\Omega}}
\def\Symx{\mathcal{S}^x_{s,\Omega}}
\def\F{\mathcal{F}}
\def\wt{\widetilde}
\def\kap{\varkappa}
\def\SLA{\mathcal{SLA}}
\DeclareMathOperator{\dist}{dist} 
\DeclareMathOperator{\supp}{supp}
\DeclareMathOperator{\lip}{Lip}
\def\XXint#1#2#3{{\setbox0=\hbox{$#1{#2#3}{%
\int}$ }
\vcenter{\hbox{$#2#3$ }}\kern-.6\wd0}}
\def\alphaflat{\alpha_{\mu,s}^{\operatorname{flat}}}
\title[Small Local Action]{Small local action of singular integrals on spaces of non-homogeneous type}
\date{\today}
\author[B. Jaye]{Benjamin Jaye}
\address{School of Mathematical and Statistical Sciences, Clemson University, Clemson, SC 29631, USA}
\email{bjaye@clemson.edu}
\author[T. Merch\'an]{Tom\'as Merch\'an}
\address{Department of Mathematical Sciences, Kent State University, Kent, Ohio 44240, USA}
\email{tmercha2@kent.edu}
\thanks{Research supported in part by NSF DMS-1830128 and DMS-1800015.}
\begin{document}

\begin{abstract}Fix $d\geq 2$ and $s\in (0,d)$.  In this paper we introduce a notion called \emph{small local action} associated to a singular integral operator, which is a necessary condition for the existence of principal value integral to exist.  Our goal is to understand the geometric properties of a measure for which an associated singular integral has small local action.  We revisit Mattila's theory of symmetric measures and relate, under the condition that the measure has finite upper density, the existence of small local action to the cost of transporting the measure to a collection of symmetric measures.  As applications, we obtain a soft proof of a theorem of Tolsa and Ruiz-de-Villa on the non-existence of a measure with positive and finite upper density for which the principal value integral associated with the $s$-Riesz transform exists if $s\not\in \mathbb{Z}$.  Furthermore, we provide a considerable generalization of this theorem if $s\in (d-1,d)$.\end{abstract}

\maketitle

\section{Introduction}

The purpose of the present paper and its sequel \cite{JM} is to conduct a study into the relationship between the different ways in which a singular integral operator with nice kernel can act in a space with rough geometry in $\R^d$, $d\geq 2$.

Fix $s\in (0,d)$.  For a Lipschitz continuous, one homogeneous, odd kernel $\Omega$, we form the $s$-dimensional Calder\'{o}n-Zygmund kernel $K(x) = \frac{\Omega(x)}{|x|^{s+1}}$.

Our goal here is  to understand the geometric consequences on a (locally finite, non-negative Borel) measure $\mu$ of a local condition called \emph{small local action}.
 \begin{definition} Fix a family of Lipschitz continuous functions $\Gamma=\{\eta^{\tau}, \tau\in (0,1)\}$ satisfying
$$
\eta^\tau(t) = \left\lbrace
\begin{array}{ll}
1 & \textup{if } 0 \leq t \leq 1-\tau, \\
0 & \textup{if } 1 \leq t < \infty,
\end{array}
\right.
$$
and $\|\eta^\tau\|_{Lip} \leq \frac{1}{\tau}$.
The kernel $\Omega$ has \emph{small local $s$-action}\footnote{We will usually just write small local action, as $s\in (0,d)$ is fixed.}  if
\begin{equation}\tag{$\SLA$}\begin{split}\text{for } \tau \in(0,1),\,  \lim_{r \rightarrow 0}&\; \frac{1}{r^{s+1}} \int_{\R^d} \!\! \Omega(x-y) \eta^\tau\Bigl(\frac{|x-y|}{r}\Bigl)\,d\mu(y) = 0\\& \text{ for }\mu\text{-almost every }x \in \mathbb{R}^d. \end{split}\end{equation}
\end{definition}
Our goal is to understand what ($\SLA$) tells us about $\mu$, under the assumption that $\mu$ has finite upper $s$-density, i.e.
$$\overline{D}_{\mu,s}(x) \stackrel{\operatorname{def}}{=} \limsup_{r\to 0}\frac{\mu(B(x,r))}{r^s}<\infty \; \mu\text{-almost every }x\in \R^d.
$$  Under the finite upper density condition on $\mu$, the property $(\SLA)$ is a necessary condition for the $\mu$-almost everywhere existence of the principal value integral $$\lim_{\eps \to 0}\int_{|x-y|>\eps} \frac{\Omega(x-y)}{|x-y|^{s+1}} d\mu(y),\; x\in \R^d,$$ see Appendix \ref{PVappend}.

Our motivation for introducing the small local action condition was primarily to understand better the difference between the existence of the principal value integral, and the action of the associated Calder\'{o}n-Zygmund operator in $L^2$, and it plays a key role in our paper \cite{JM} on this topic\footnote{We refer the reader to the introduction of \cite{JM} or Tolsa's monograph \cite{To5} for a history of the topic, as it is not our central subject here, but we mention that the existence of principal values is not necessarily implied by the $L^2$ boundedness of the associated operator in a space of non-homogeneous type \cite{CH, Dav} even within the class of homogeneous convolution kernels \cite{JN3}, for additional important results see also \cite{M, NToV, MP,MV, RVT, To3, To4, To5}.}.

In \cite{M},  Mattila already  studied similar properties to $(\SLA)$  under the additional regularity assumption that the measure $\mu$ has \emph{positive lower density:} \begin{equation}\label{PLD}\underline{D}_{\mu,s}(x)\stackrel{\operatorname{def}}{=}\liminf_{r \rightarrow 0} \frac{\mu(B(x,r))}{r^s} >0\text{ for }\mu\text{-almost every }x \in \mathbb{R}^d.\end{equation}  In this article we shall adapt Mattila's machinery to study the property $(\SLA)$ without the lower density assumption.

Certainly, a crude sufficient condition for $(\SLA)$ to hold is that $\mu$ has zero density, i.e. $\overline{D}_{\mu,s}(x)=0$ for $\mu$-almost every $x\in \R^d$.  However, since $\Omega$ is odd, this sufficient condition cannot be necessary if $s \in \mathbb{Z}$. Indeed, if $\mu$ is the induced Lebesgue measure of an $s$-plane, then $(\SLA)$ holds, but $\mu$ does not have zero density.

For $s\in \mathbb{Z}$ a sufficient condition for the property $(\SLA)$ is provided by Lipschitz transportation numbers, introduced to the study of singular integrals by Tolsa (see \cite{To1} and \cite{To2}).  We shall use the following variant of the transportation number
 $$\alphaflat(B(x,r))=\inf_{L \in \mathcal{G}(s,d)}\sup_{\substack{f\in \Lip_0(B(x,4r))\\\|f\|_{\Lip}\leq \tfrac{1}{r}}}\Bigl|\frac{1}{r^s}\int_{\R^d} \varphi\Bigl(\frac{|\cdot-x|}{r}\Bigl) f d(\mu-c_{\mu,L}\mathcal{H}^s_{x+L})\Bigl|,
 $$
 where
 \begin{itemize}
  \item $\mathcal{G}(s,d)$ is the collection of $s$-dimensional linear subspaces of $\R^d$,
 \item  $\varphi$ is a smooth function that satisfies $\varphi\equiv 1$ on $(0,3)$,  $0\leq \varphi \leq 1$, and $\supp(\varphi) \subset (0,4)$,
 \item and $$c_{\mu,L}= \int_{\R^d} \varphi\bigl(\tfrac{|\cdot-x|}{r}\bigl) \,d\mu\Bigl[\int_{\R^d} \varphi \bigl(|\tfrac{\cdot-x|}{r}\bigl) \,d\mathcal{H}^s_{x+L}\Bigl]^{-1}.$$
 \end{itemize}

 The role of the $\alpha$-numbers is exhibited in the following theorem:

 \begin{thm}\label{rieszthm}  Fix $\Omega(x)=x$ to be the Riesz kernel.  Suppose that $\mu$ is a measure with $\overline{D}_{\mu}(x)<\infty$ for $\mu$-almost every $x\in \R^d$.  The small local action property $(\SLA)$ holds if and only if
 \begin{enumerate}
 \item $s\not\in \mathbb{Z}$ and $\mu$ has zero density,
 \item $s\in \mathbb{Z}$ and $\mu$ satisfies $\lim_{r\to 0}\alphaflat(B(x,r))=0$  for $\mu$-a.e $x\in \R^d$.
 \end{enumerate}
 \end{thm}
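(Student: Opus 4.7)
My plan is to handle the two sufficiency directions by direct computation and to reduce the two necessity directions to Mattila's classification of symmetric measures through a tangent-measure analysis.

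\emph{Sufficiency.} If $\overline{D}_{\mu,s}(x)=0$ $\mu$-a.e., then since $\Omega$ is $1$-homogeneous and Lipschitz one has $|\Omega(x-y)\,\eta^\tau(|x-y|/r)|\leq Cr$ on $B(x,r)$, so the quantity inside $(\SLA)$ is bounded by $C\,\mu(B(x,r))/r^s\to 0$; this proves the ``if'' direction of (1), and works for every $s\in(0,d)$. For the ``if'' direction of (2), fix $s\in\mathbb{Z}$ and assume $\alphaflat(B(x,r))\to 0$. The function $h(y)=\eta^\tau(|x-y|/r)\,\Omega(y-x)$ is supported in $B(x,r)$, is odd about $x$, and has Lipschitz norm $O(1/\tau)$. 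Oddness gives $\int h\,d\mathcal{H}^s_{x+L}=0$ for every $L\in\mathcal{G}(s,d)$, and applying the defining estimate of $\alphaflat$ to a suitable rescale of $h$ yields $\bigl|r^{-(s+1)}\int h\,d\mu\bigr|\leq C\tau^{-1}\alphaflat(B(x,r))\to 0$ for each fixed $\tau\in(0,1)$.

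\emph{Necessity.} Fix $x$ with $\overline{D}_{\mu,s}(x)<\infty$ at which $(\SLA)$ holds and choose $r_n\to 0$ along which $\mu(B(x,r_n))/r_n^s$ is bounded. Let $\nu$ be a weak subsequential limit of the blow-ups $r_n^{-s}\,(T_{x,r_n})_*\mu$, where $T_{x,r}(y)=(y-x)/r$. A change of variables in $(\SLA)$ together with the $1$-homogeneity and oddness of $\Omega$ yields
$$\int \Omega(z)\,\eta^\tau(|z|)\,d\bigl(r_n^{-s}(T_{x,r_n})_*\mu\bigr)(z)\to 0,$$
and by weak convergence $\int \Omega(z)\,\eta^\tau(|z|)\,d\nu(z)=0$ for every $\tau\in(0,1)$. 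Letting $\tau\to 0$ along radii with $\nu(\partial B(0,\rho))=0$ gives $\int_{B(0,\rho)}\Omega(z)\,d\nu(z)=0$ for all such $\rho$. For $\Omega(z)=z$ this is precisely Mattila's center-of-mass symmetry of $\nu$ at the origin. Preiss's principle, that tangent measures of $\nu$ at $\nu$-a.e.\ point are again tangent measures of $\mu$ at $x$, propagates this symmetry to $\nu$-a.e.\ point of $\supp\nu$, so $\nu$ is an $s$-symmetric measure in the sense of \cite{M} of finite upper density.

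Mattila's classification then splits on the arithmetic of $s$: for $s\notin\mathbb{Z}$ every such $\nu$ with finite upper density is trivial, while for $s\in\mathbb{Z}$ every nontrivial such $\nu$ is (at generic points of its support) a constant multiple of $\mathcal{H}^s$ on an $s$-plane through the origin. In case (1) a positive upper density at $x$ would produce a nontrivial $\nu$, contradicting Mattila; hence $\overline{D}_{\mu,s}(x)=0$ $\mu$-a.e. In case (2), if $\limsup_{r\to 0}\alphaflat(B(x,r))>\delta$ on a set of positive measure, I split: on scales where the density collapses, both $\mu$ and the best candidate plane measure contribute $o(r^s)$ in the defining supremum of $\alphaflat$, forcing $\alphaflat\to 0$; on scales of positive lower density, the tangent $\nu$ is a plane measure and, since $\alphaflat(B(x,r_n))$ is (up to a constant) a weakly continuous functional of the rescaled measure near a plane measure, this also forces $\alphaflat(B(x,r_n))\to 0$, contradicting the assumption either way.

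The principal obstacle is running this tangent-measure analysis without any lower-density hypothesis: several steps in Mattila's original arguments in \cite{M} use positive lower density to guarantee mass at small scales, and must here be replaced by arguments exploiting only finite upper density. In particular, one must verify that the $(\SLA)$ symmetry transfers cleanly to $\nu$ through the $\tau\to 0$ limit--the Lipschitz cutoff parameter $\tau$ is precisely there to avoid boundary issues on $\{|z|=\rho\}$--and that Mattila's rigidity for $s$-symmetric measures persists in the finite upper density setting; these are exactly the adaptations of Mattila's theory that the authors announce in the introduction.
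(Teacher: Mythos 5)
Your overall strategy---blow up via tangent measures, show the limit is symmetric, then invoke a classification of symmetric measures for the Riesz kernel---is the same as the paper's. The paper factors this through the general Theorem~\ref{soft} (which reduces $(\SLA)$ to $\alpha_{\mu,\Omega,s}\to 0$ for an arbitrary odd kernel) and Proposition~\ref{Rieszsymprop} (which classifies $\Sym$ for $\Omega(x)=x$), so Theorem~\ref{rieszthm} becomes a two-line corollary; you prove it directly, which is a structural rather than substantive difference. Two places, though, where your outline differs from the paper's proof and where the details matter:

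First, to show the tangent measure is symmetric at \emph{every} point of its support (as Mattila's notion requires), the paper does not invoke Preiss's ``tangent measures of tangent measures'' principle. Instead it applies Egoroff to get uniform convergence of the $(\SLA)$ integrals on a set $E$, picks a $\mu$-density point $a_0$ of $E\cap F_k$, and for each $x\in\supp(\nu)$ produces approximating points $a_j\in E\cap F_k$ with $(a_j-a_0)/r_j\to x$; the uniform convergence at the $a_j$ then yields the vanishing of $\int_{B(x,\rho)}\Omega(x-y)\,d\nu$. Your Preiss-principle route gives symmetry only at $\nu$-a.e.\ point, which you would then need to upgrade to all of $\supp(\nu)$ by a continuity argument; this is fixable but is a genuinely different (and slightly less self-contained) path.

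Second---and this is the gap you correctly flag but do not fill---the classification step is not merely ``Mattila's rigidity persists with finite upper density.'' The blow-up normalization in the paper gives the tangent measure $\nu$ the polynomial growth bound $\nu(B(z,r))\leq r^s$ for all $z,r$, i.e.\ $\nu\in\mathcal{M}_s$, which is strictly stronger than finite upper density and is exactly what makes the classification go through without a lower-density hypothesis: the paper invokes Proposition~4.7 of \cite{JNT} (via the weaker notion of $\varphi$-symmetry, Remark~\ref{intsym}), which says either $\supp(\nu)$ lies in an $\lfloor s\rfloor$-plane (with the conclusion being $\nu=c\mathcal{H}^{\lfloor s\rfloor}_{|L}$ or $\supp(\nu)$ is $(\lfloor s\rfloor-1)$-rectifiable), or $\nu(B(x_0,R))$ grows faster than $R^{\lfloor s\rfloor+1-\varepsilon}$; the growth bound kills the second alternative and, via Lemma~\ref{nocharge}, kills the lower-dimensional rectifiable case and (for $s\notin\mathbb{Z}$) the plane case. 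Without tracking that you actually obtain $\nu\in\mathcal{M}_s$---not just finite upper density---this step does not close. Your sketch of the necessity of $\alphaflat\to 0$ in case (2) (splitting between density-collapsing scales and tangent-plane scales) is morally the content of Lemma~\ref{weak}, but as written it relies on $\alphaflat$ being ``weakly continuous near a plane measure,'' which is exactly the Arzel\`a--Ascoli compactness argument in that lemma and would need to be spelled out.
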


  The part of Theorem \ref{rieszthm} relating to $s\not\in \mathbb{Z}$ is closely related to (and implies) a theorem of Ruiz de Villa and Tolsa \cite{RVT} on the non-existence of a non-zero measure $\mu$ satisfying  $\overline{D}_{\mu,s}(x)\in (0,\infty)$ for $\mu$-almost every $x\in \R^d$ and for which the $s$-Riesz transform exists in principal value. The proof given by Ruiz de Villa and Tolsa in \cite{RVT} is a delicate analysis which emphasizes the use of specific test functions.  As a byproduct of our work, we obtain a new proof that proceeds via a soft compactness argument.

It is a consequence of Preiss's theorem \cite{P} that if $\underline{D}_{\mu,s}(x)>0$ for $\mu$-almost every $x\in \R^d$ (i.e. (\ref{PLD}) holds), then the condition $\lim_{r\to 0}\alphaflat(B(x,r))=0$ $\mu$-a.e. implies that $\mu$ is $s$-rectifiable (the support of $\mu$ can be covered, up to a sets of zero $s$-dimensional Hausdorff measure, by a countable union of $s$-dimensional Lipschitz submanifolds).  Therefore, under this positive lower density condition, one recovers\footnote{With essentially the same proof.} the Mattila-Preiss theorem \cite{MP} that the existence of the principal value integral of the $s$-Riesz transform implies that the underlying measure is $s$-rectifiable.  However, if $\underline{D}_{\mu,s}(x)=0$ $\mu$-almost every $x \in \R^d$, there are examples of purely unrectifiable measures $\mu$ for which $\lim_{r\to 0}\alphaflat(B(x,r))=0$ $\mu$-a.e. (see Section 5 of \cite{P}), so small local action alone does not imply rectifiability for such irregular measures.  In particular, one cannot expect to recover Tolsa's theorem \cite{To4} on the rectifiability of measures supported on sets of locally finite $s$-dimensional Hausdorff measure for which the Riesz transform exists in principal value from consideration of small local action alone.

Theorem \ref{rieszthm} follows from the general statement Theorem \ref{soft} below, which relates the condition $(\SLA)$ to Mattila's notion of a symmetric measure.

\begin{definition} A point $x \in \mathbb{R}^d$ is an $\Omega$-\emph{symmetric point} of a measure $\nu$ if
 $$ \int_{B(x,r)} \Omega(x-y) \,d\nu(y)=0 \text{  for all } r >0.$$
 The set $\Omega$-symmetric points of a measure $\nu$ is denoted by $S(\Omega,\nu)$.  A measure $\nu$ is called $\Omega$-\emph{symmetric} if $\supp(\nu) \subset S(\Omega,\nu)$.\end{definition}
In our study a subset of symmetric measures will naturally arise.  Define $\mathcal{M}_s$ to be the collection of measures $\mu$ satisfying the growth bound $\mu(B(x,r))\leq r^s$ for all $x\in \R^d$ and $r>0$.  Set
$$\Sym = \{ \nu: \nu \text{ is }\Omega\text{-symmetric}, \; \nu\in \mathcal{M}_s\}.$$
Our main general result relates the property $(\SLA)$ to a certain transportation distance from $\mu$ to the set $\Sym$.  For $x\in \R^d$, set $$\Symx= \{ \nu : \nu \in \Sym, x \in S(\Omega,\nu)\},$$ and
$$ \alpha_{\mu,\Omega,s}(B(x,r)) = \inf_{\nu \in \Symx}\sup_{\substack{f \in \lip_0(B(x,4r)) \\ \|f\|_{\lip}\leq \frac{1}{r}}}
 \Bigl|\frac{1}{r^s}\int_{\R^d} \varphi\Bigl(\frac{|\cdot-x|}{r}\Bigl) f \;
\,d (\mu - c_{\mu,\nu} \nu) \Bigl|,$$
with $c_{\mu,\nu}= \int_{\R^d} \varphi\bigl(\frac{|\cdot-x|}{r}\bigl)d\mu \Bigl[\int_{\R^d} \varphi\bigl(\frac{|\cdot-x|}{r}\bigl) d\nu\Bigl]^{-1}$ if $\int_{\R^d}\varphi \bigl(\frac{|\cdot-x|}{r}\bigl) d\nu \neq 0$ and $c_{\mu, \nu} = 0$ otherwise\footnote{ In the event that $\int_{\R^d}\varphi \bigl(\frac{|\cdot-x|}{r}\bigl) d\nu = 0$, the coefficient $c_{\mu,\nu}$ can be picked to be any real number without changing the results or proofs. }.
\begin{thm}\label{soft}
Suppose that $\overline{D}_{\mu,s}(x)<\infty$ $\mu$-almost every $x \in \R^d$.  The property $(\SLA)$ holds if and only if $\lim_{r \rightarrow 0} \alpha_{\mu,\Omega,s}(B(x,r))=0$ for $\mu$-almost every $x \in \mathbb{R}^d$.
\end{thm}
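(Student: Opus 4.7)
The strategy is to handle the two implications separately. The forward implication is essentially algebraic; the reverse requires a compactness--blowup argument modelled on Mattila's framework.

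\emph{Easy direction.} Fix $x\in \R^d$, $\tau\in(0,1)$, and $r>0$. For every $\nu\in \Symx$,
\[
\int \Omega(x-y)\,\eta^\tau(|x-y|/r)\, d\nu(y) = 0,
\]
obtained by expressing $\eta^\tau(|\cdot-x|/r)$ as a layer-cake integral of $\mathbf{1}_{B(x,\rho)}$'s and invoking $\int_{B(x,\rho)}\Omega(x-y)\,d\nu(y)=0$ for every $\rho>0$. Since $\Omega$ is Lipschitz and one-homogeneous, $f(y):=\Omega(x-y)\eta^\tau(|x-y|/r)$ is supported in $B(x,r)\subset B(x,3r)$ with $\|f\|_\infty \lesssim r$ and $\|f\|_{\lip}\lesssim 1/\tau$. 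Testing a suitable constant multiple of $f$ in the dual definition of $\alpha_{\mu,\Omega,s}(B(x,r))$ (the $c_{\mu,\nu}\nu$-term vanishes by the display above for every $\nu$, so the infimum over $\Symx$ is free) produces
\[
\frac{1}{r^{s+1}}\Big|\int \Omega(x-y)\,\eta^\tau(|x-y|/r)\,d\mu(y)\Big| \lesssim \frac{1}{\tau}\,\alpha_{\mu,\Omega,s}(B(x,r)),
\]
and letting $r\to 0$ at fixed $\tau$ yields $(\SLA)$.

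\emph{Hard direction.} Argue by contradiction: assume there exist $E\subset\R^d$ with $\mu(E)>0$ and $\delta>0$ such that for each $x\in E$ one can find $r_k=r_k(x)\to 0$ with $\alpha_{\mu,\Omega,s}(B(x,r_k))\geq \delta$. After restriction, assume $(\SLA)$ holds throughout $E$, and that $\mu$ is uniformly upper $s$-regular on $E$ at small scales: there exist $N<\infty$, $r_0>0$ with $\mu(B(y,\rho))\leq N\rho^s$ for all $y\in E$ and $0<\rho<r_0$. Fix a $\mu|_E$-density point $x\in E$ and form the blow-ups $\mu_k(A):=\mu(x+r_kA)/r_k^s$, which satisfy $\mu_k(B(0,R))\leq NR^s$ once $k$ is large. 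Extract a weakly-$*$ convergent subsequence $\mu_k \rightharpoonup \mu_\infty$. Applying $(\SLA)$ at $x$ with scale $\rho r_k$ and changing variables (using one-homogeneity and oddness of $\Omega$) yields, for each $\rho>0$ and $\tau\in(0,1)$,
\[
\int \Omega(-z)\,\eta^\tau(|z|/\rho)\, d\mu_k(z) \longrightarrow 0,
\]
and weak-$*$ convergence against this continuous, compactly supported test function forces $\int \Omega(-z)\eta^\tau(|z|/\rho)\,d\mu_\infty(z)=0$. Letting $\tau\downarrow 0$ at all but countably many $\rho$ (those with $\mu_\infty(\partial B(0,\rho))=0$) and approximating general $\rho$ shows $\int_{B(0,\rho)}\Omega(-z)\,d\mu_\infty(z)=0$ for every $\rho>0$, i.e. $0\in S(\Omega,\mu_\infty)$. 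The density-point choice of $x\in E$ further upgrades the regularity: any $y\in\supp\mu_\infty$ is a limit of $y_k$ with $x+r_ky_k\in E$, so the bound $\mu_k(B(y_k, R))\leq NR^s$ persists through weak convergence; together with a trivial extension to off-support $y$ this yields $\mu_\infty/N\in \mathcal{S}^0_{s,\Omega}$.

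\emph{Conclusion and main obstacle.} Pulling $\mu_\infty/N$ back via $z\mapsto x+r_kz$ produces $\nu_k\in\Symx$, and rewriting $\alpha_{\mu,\Omega,s}(B(x,r_k))$ in rescaled coordinates reduces the contradiction to showing
\[
\sup_{\|\tilde f\|_{\lip}\leq 1,\,\supp\tilde f\subset B(0,4)}\Big|\int \varphi(|z|)\tilde f(z)\, d(\mu_k - c_k \mu_\infty/N)(z)\Big| \longrightarrow 0,
\]
with normalizing constants $c_k\to N$. This uniform convergence combines $\mu_k\rightharpoonup\mu_\infty$ with Arzel\`a--Ascoli precompactness of the admissible class of $\tilde f$, contradicting $\alpha_{\mu,\Omega,s}(B(x,r_k))\geq \delta$. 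The principal technical obstacle is producing a symmetric blow-up of the correct growth class: pointwise finite upper density at $x$ controls only balls centered at $x$, but the definition of $\Sym$ insists on $\nu(B(y,r))\leq r^s$ at every center. The density-point construction inside a uniformly upper-regular subset of $E$ is precisely what bridges this gap and is what allows the argument to dispense with the positive lower density assumption (\ref{PLD}) used in Mattila's original work.
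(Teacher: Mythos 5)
Your argument follows the same blow-up strategy the paper uses (and your ``easy direction'' is essentially the paper's `if' argument verbatim), but the ``hard direction'' has a genuine gap: you never establish that the tangent measure $\mu_\infty$ is $\Omega$-\emph{symmetric}, i.e.\ that \emph{every} $y\in\supp\mu_\infty$ lies in $S(\Omega,\mu_\infty)$. Your blow-up argument only applies $(\SLA)$ at the fixed center $x$, which gives $0\in S(\Omega,\mu_\infty)$, and your density-point paragraph about the $y_k$ addresses only the growth bound $\mu_\infty(B(y,R))\lesssim R^s$. Membership in $\mathcal{S}^0_{s,\Omega}$ requires membership in $\mathcal{S}_{s,\Omega}$, so without the symmetry of off-center support points the contradiction via Lemma~\ref{weak} cannot be run.

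The missing ingredient is a uniformity statement for $(\SLA)$ on $E$. To prove that a point $y\in\supp\mu_\infty$ is a symmetric point, one selects $a_k=x+r_ky_k\in E$ with $y_k\to y$ (exactly as in your growth argument) and must then pass to the limit in
\[
\int_{\R^d}\Omega(a_k-w)\,\eta^\tau\!\Bigl(\frac{|a_k-w|}{\rho r_k}\Bigr)\frac{d\mu(w)}{(\rho r_k)^{s+1}}\longrightarrow 0.
\]
This is a diagonal limit: the base point $a_k$ moves as the scale $\rho r_k$ shrinks, so pointwise $(\SLA)$ at each individual point of $E$ is not enough. The paper makes this work by first applying Egoroff's theorem to decompose $E$ into pieces on which the $(\SLA)$ convergence is \emph{uniform}, and then taking $a_0$ to be a density point of such a piece intersected with one of the $F_k$'s. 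You need to add this Egoroff step and the corresponding moving-center limit to close the argument. (A secondary, minor point: the ``trivial extension to off-support $y$'' for the growth bound costs a multiplicative constant $3^s$, which is why the paper includes the $3^s$ in the normalization of the blow-up $\mu_j$; your $\mu_\infty/N$ should be $\mu_\infty/(3^sN)$ or similar.)
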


The novelty in this theorem comes from the fact that no lower regularity conditions on $\mu$ are imposed. The above theorem reduces the study of $(\SLA)$ to the question of understanding the structure of the set $\mathcal{S}_{s,\Omega}$, and the associated set of symmetric points $S(\nu, \Omega)$ for $\nu\in \mathcal{S}_{s,\Omega}$.

 For instance, in order to prove Theorem \ref{rieszthm} above for the Riesz kernel $\Omega(x)=x$, we need to show that the set $\Sym$ consists of only the zero measure for $s\notin\mathbb{Z}$, while if $s\in \mathbb{Z}$, and $x\in \R^d$, then $$\mathcal{S}_{s,\Omega}^x =
\bigl\{\nu\in \mathcal{M}_s : \nu = c\mathcal{H}^s_{x+L}\text{ for some }c>0, \,L\in \mathcal{G}(s,d)\bigl\}.
$$
This result is the content of Proposition \ref{Rieszsymprop} below, which relies on the work by Mattila-Preiss \cite{M, MP} (in the form presented in \cite{JNT}).

We will describe the set $\mathcal{S}_{s,\Omega}$ and associated symmetric points in two further cases: \begin{enumerate}
\item the Huovinen kernel, which is given, for a fixed odd $k\in \mathbb{N}$, by
 \begin{equation}\label{Huoker}\Omega: \mathbb{C}\setminus \{0\} \mapsto \mathbb{C}\setminus \{0\},\; \Omega(z)=\frac{z^k}{|z|^{k-1}},\end{equation}
leading to Theorem \ref{Huovinen} below, and,
\item  non-degenerate, real analytic kernels, in the case when $s\in (d-1,d)$, leading to Theorem \ref{gennonint} below.
\end{enumerate}

Huovinen \cite{H} studied the relationship between the existence of principal value intergrals associated to kernels of the form (\ref{Huoker}) and rectifiability, under the assumption of positive lower density (\ref{PLD}).  This work included a deep study of the symmetric measure associated to the Huovinen kernels, which we revisit in Section \ref{specific} to completely describe the set $\Sym$.   We say that $\nu\in \mathcal{M}_1$ is a $k$-spike measure associated to $L\in \mathcal{G}(1,2)$ and $z \in \mathbb{C}$, if, for some $c\geq 0$,
$$ \nu_{m,L,z}=c\sum_{n=0}^{m-1} \mathcal{H}_{e^{\pi in/m}L+z},
$$
where $m$ divides $k$ (henceforth $m\mid k$).  We set $\text{Spike}_k$ to be the collection of all such spike measures in $\mathcal{M}_1$ over $L\in \mathcal{G}(1,2)$, $z\in \mathbb{C}$, and $m\mid k$.

\begin{thm}[Huovinen kernel] \label{Huovinen}
Fix $\Omega(z)= \frac{z^k}{|z|^{k-1}},$ where $z \in \mathbb{C}\setminus\{0\}$ and $k$ is odd.
For a measure $\mu$ with $\overline{D}_{\mu,s}(z)<\infty$ $\mu$-almost every $z \in \mathbb{C}$, the property $(\SLA)$ holds if and only if
 \begin{enumerate}
\item $s = 1$ and $\mu$ satisfies $\lim_{r\to 0}\alpha_{\mu,\Omega,1}(B(z,r))=0$  for $\mu$-almost every $z\in \mathbb{C}$, where $$\mathcal{S}^z_{1,\Omega}= \{\nu\in \operatorname{Spike}_k: z\in \supp(\nu)\}.$$
\item $s \in (0,2)\setminus \{1\}$ and $\mu$ has zero density, with $$\mathcal{S}_{s,\Omega}=\{\text{the zero measure}\}.$$
  \end{enumerate}
 \end{thm}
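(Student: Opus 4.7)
The plan is to derive Theorem \ref{Huovinen} from Theorem \ref{soft} by classifying the set $\mathcal{S}_{s,\Omega}$ for the Huovinen kernel. By Theorem \ref{soft}, $(\SLA)$ is equivalent to $\lim_{r\to 0}\alpha_{\mu,\Omega,s}(B(z,r))=0$ at $\mu$-a.e.\ $z\in\mathbb{C}$, so the task reduces to describing $\mathcal{S}_{s,\Omega}$ and, for a given $z$, the subset $\mathcal{S}^z_{s,\Omega}$ when $\Omega(z)=z^k/|z|^{k-1}$.

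First I would verify the inclusion $\operatorname{Spike}_k\subset\mathcal{S}_{1,\Omega}$. The key ingredient is the homogeneity identity $\Omega(e^{i\theta}w)=e^{ik\theta}\Omega(w)$. For the center $w$ of a spike $\nu_{m,L,w}$, rotation by $\pi/m$ preserves $\nu$ but multiplies the integrand in $\int_{B(w,r)}\Omega(w-\cdot)\,d\nu$ by $e^{ik\pi/m}=-1$ (using $m\mid k$ with $k$ and hence $m$ both odd, so that $k/m$ is odd), forcing the integral to vanish and placing $w$ in $S(\Omega,\nu)$. For a non-center point $z\in\supp(\nu)$ lying on one line $\ell$ of the spike, the contribution from $\ell$ vanishes by the one-dimensional oddness of $\Omega$ along $\ell$, while the contributions of the remaining lines cancel through the dihedral symmetry of the spike around its center, pairing lines by reflection across $\ell$ and invoking the identity $\Omega(e^{2i\theta}\bar w)=e^{2ik\theta}\overline{\Omega(w)}$ together with the rotational relations among the paired terms. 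Membership $\nu_{m,L,w}\in\mathcal{M}_1$ for appropriately small $c$ is elementary.

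The crux of the classification is the converse: $\mathcal{S}_{1,\Omega}\subset\operatorname{Spike}_k$ and $\mathcal{S}_{s,\Omega}=\{0\}$ for $s\neq 1$. This is essentially Huovinen's classification from \cite{H}, established there under a positive lower density hypothesis which we do not have. To dispense with that hypothesis I would pass to tangent measures: a nonzero $\nu\in\mathcal{M}_s$ has positive upper density at $\nu$-a.e.\ point, so at such a point it admits a nonzero tangent measure $\tilde\nu\in\mathcal{M}_s$; $\Omega$-symmetry passes to $\tilde\nu$ by continuity of $\Omega$ and the integral form of the symmetry condition. Tangent measures acquire sufficient uniform regularity to fall within the range of Huovinen's rigidity argument, whose conclusion is that $\tilde\nu$ must be a spike. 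For $s\neq 1$ this is incompatible with the $s$-growth bound since spikes satisfy $\tilde\nu(B(z,r))\asymp r$, which fails at small $r$ when $s>1$ and at large $r$ when $s<1$; hence $\nu=0$. For $s=1$ a localization argument identifies $\nu$ near each density point with a spike, globally placing $\nu$ in $\operatorname{Spike}_k$.

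Once this classification is in hand, Theorem \ref{Huovinen} is immediate from Theorem \ref{soft}. In case (2), $\mathcal{S}^z_{s,\Omega}=\{0\}$ collapses the $\alpha$-number to a bump-function average of $\mu$ (with $c_{\mu,0}=0$), and choosing a Lipschitz test function $f$ concentrated near $z$ shows $\alpha_{\mu,\Omega,s}(B(z,r))\to 0$ if and only if $\overline{D}_{\mu,s}(z)=0$. In case (1), $\mathcal{S}^z_{1,\Omega}$ is exactly the family of spike measures whose support contains $z$, so the $\alpha$-condition is the direct translation of Theorem \ref{soft}. The principal obstacle is the tangent-measure step in the classification: guaranteeing nontrivial blow-ups, verifying that $\Omega$-symmetry passes to limits for the Huovinen kernel, and localizing Huovinen's rigidity argument back to a description of $\nu$ itself on a neighborhood of each of its density points.
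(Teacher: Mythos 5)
Your high-level reduction is correct: as in the paper, Theorem \ref{Huovinen} should follow from Theorem \ref{soft} once $\mathcal{S}_{s,\Omega}$ and the symmetric-point sets of its members are classified. But your plan for that classification diverges from the paper's route in a way that introduces real gaps.

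The central misconception is your claim that Huovinen's classification ``is established there under a positive lower density hypothesis which we do not have,'' motivating a detour through tangent measures. That is not the situation. Theorem 3.26 of \cite{H} (quoted as Theorem \ref{Huo}) classifies \emph{all} nonzero $\Omega$-symmetric measures into types (A)--(F) with no density hypothesis whatsoever; Huovinen's lower-density assumption enters only in his subsequent rectifiability results, not in the classification of symmetric measures itself. The paper therefore applies Theorem \ref{Huo} directly and then filters: types (B), (D), (E), (F) are ruled out by the $\mathcal{M}_s$ growth bound (they force quadratic or discrete growth), type (A) is the line measure, and type (C) is pinned down by a careful local analysis (exploiting the symmetry condition at a point on one ray to derive the relation $c_{j-1}(\cos\theta e^{i\theta}-1)^k + c_{j+1}(\cos\theta e^{-i\theta}-1)^k=0$, deducing $\theta=\pi p/k$ with $p$ odd, $m=2n$ even with $n\mid k$ odd, and equal weights). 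Your proposed tangent-measure workaround is both unnecessary and, as you yourself note, incomplete: passing from ``every tangent measure at $\nu$-a.e.\ point is a spike'' back to ``$\nu$ is a spike'' is a nontrivial globalization step that you wave at but do not supply, and for $s\ne 1$ you would similarly need to rule out nonzero measures all of whose tangents vanish.

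A second gap is that your plan never addresses the equality $S(\Omega,\nu)=\supp(\nu)$ for spike measures $\nu$. The statement of Theorem \ref{Huovinen} asserts $\mathcal{S}^z_{1,\Omega}= \{\nu\in \operatorname{Spike}_k: z\in \supp(\nu)\}$, which requires proving that a spike measure has no off-support symmetric points. This is not automatic: a priori, points on the bisector rays $\wt\Lambda_j$ are candidates. The paper handles this via Lemma \ref{nosup} (any symmetric point off the support is equidistant from at least two support points) together with an angle computation at a putative bisector symmetric point showing $\gamma=\pi q/k$ with $q$ odd would force $n$ even, a contradiction. Without this step you cannot conclude the claimed description of $\mathcal{S}^z_{1,\Omega}$, and hence cannot actually apply Theorem \ref{soft} to deliver Theorem \ref{Huovinen}.

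In short: the skeleton (reduce to classifying $\mathcal{S}_{s,\Omega}$) is right, and your verification that spikes are symmetric is consistent with the paper (cf.\ Example 3.27 of \cite{H}), but the converse classification should go through Theorem \ref{Huo} directly rather than through tangent measures, and the symmetric-point analysis for spikes is an essential missing ingredient.
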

This result is applied in our paper \cite[Theorem 1.7]{JM} to give a necessary and sufficient geometric condition on a measure $\mu$ for the Huovinen transform to exist in principal value, assuming the operator is bounded in $L^2(\mu)$, thereby answering a problem which arose in \cite{JN3}. It is important in this application that the only symmetric points of a measure $\nu\in \operatorname{Spike}_k$ are points on the support.  The case of Theorem \ref{Huovinen} for $s\in (1,2)$ is actually a special case of our next result.

  \begin{thm}[Codimension smaller than one]\label{gennonint}
 Fix $s \in (d-1,d)$. Suppose that the function $x\mapsto \Omega(x)|x|^k$ is real analytic for some integer $k$, and the principal value distribution $\frac{\Omega(\cdot\,)}{|\,\cdot\,|^{d+1}}$ has non-vanishing Fourier transform on $\mathbb{S}^{d-1}$ (see Section \ref{codimsec}). For a measure $\mu$ with $\overline{D}_{\mu,s}(x)<\infty$ $\mu$-almost every $x \in \R^d$, the property $(\SLA)$ holds if and only if $\mu$ has zero density.
 \end{thm}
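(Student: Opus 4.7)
The plan is to combine Theorem~\ref{soft} with a structural analysis of $\Sym$. The ``if'' direction is immediate from the growth bound: one has
\[
\Bigl|\tfrac{1}{r^{s+1}}\int \Omega(x-y)\eta^\tau(|x-y|/r)\,d\mu(y)\Bigr| \le \tau^{-1}\|\Omega\|_{L^\infty(\mathbb S^{d-1})}\,\frac{\mu(B(x,r))}{r^s},
\]
which tends to $0$ if $\overline D_{\mu,s}(x)=0$. For the ``only if'' direction, Theorem~\ref{soft} provides $\alpha_{\mu,\Omega,s}(B(x,r))\to 0$ at $\mu$-a.e.\ $x$. The main structural claim I would prove is that under the hypotheses of the theorem, $\Sym=\{0\}$. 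Granted this, $\Symx=\{0\}$ and $c_{\mu,\nu}=0$ in the definition of $\alpha$; selecting $f$ to be a $(1/r)$-Lipschitz bump equal to $1$ on $B(x,r)$ and supported in $B(x,4r)$ then gives $\alpha_{\mu,\Omega,s}(B(x,r))\ge c\,\mu(B(x,r))/r^s$, forcing $\overline D_{\mu,s}(x)=0$.

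For the structural claim, take a nonzero $\nu\in\Sym$; after localization assume $\nu$ has compact support. For $x\in\supp(\nu)$, the symmetric condition gives $F_x(r):=\int_{B(x,r)}\Omega(x-y)\,d\nu(y)\equiv 0$, i.e.\ $dF_x\equiv 0$ as a Stieltjes measure on $(0,\infty)$. Pairing $dF_x$ with $g(r)=r^{-(d+1)}$ on the annulus $\eps<|x-y|<R$---via the identity $r^{-(d+1)}=R^{-(d+1)}+(d+1)\int_r^R t^{-(d+2)}\,dt$ and Fubini---yields
\[
\int_{\eps<|x-y|<R}\frac{\Omega(x-y)}{|x-y|^{d+1}}\,d\nu(y)=0,\qquad 0<\eps<R.
\]
Since $\nu$ is compactly supported and belongs to $\mathcal{M}_s$ with $s<d$, the truncations converge as $\eps\to 0$ and $R\to\infty$, so the principal-value integral $T\nu(x):=\mathrm{p.v.}\int K(x-y)\,d\nu(y)$, with $K(y)=\Omega(y)/|y|^{d+1}$ (the kernel whose Fourier multiplier $m=\widehat K$ is assumed non-vanishing on $\mathbb S^{d-1}$), satisfies $T\nu(x)=0$ for every $x\in\supp(\nu)$.

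The final step---and the main technical obstacle---is to upgrade this pointwise vanishing of $T\nu$ on $\supp(\nu)$ to $\nu=0$. Observe that distributional vanishing $T\nu\equiv 0$ on $\R^d$ would suffice, since Fourier inversion plus the non-vanishing of $m$ off the origin would force $\widehat\nu$ to be supported at $\{0\}$, making $\nu$ a polynomial and hence zero (by $\nu\in\mathcal{M}_s$ with $s<d$); however, $T\nu$ need not vanish as a distribution (it is generally a nonzero real-analytic function off $\supp(\nu)$). The role of the remaining hypotheses---real analyticity of $\Omega$ (equivalently, the representation $\Omega(x)=P(x)/|x|^k$ with $P$ an odd homogeneous polynomial of degree $k+1$, forced by $1$-homogeneity and analyticity of $\Omega|x|^k$) and the codimension condition $s\in(d-1,d)$---is precisely to close this gap. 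My approach would be a tangent-measure/blow-up argument: at $\nu$-a.e.\ $x$ one has $\overline D_{\nu,s}(x)>0$, so nontrivial tangent measures $\tilde\nu$ to $\nu$ exist, and each inherits $\Omega$-symmetry and membership in $\mathcal{M}_s$ by the continuity of the defining spherical integrals $F_x$. Exploiting the ellipticity provided by $m$ together with the algebraic structure of $\Omega$, one expects to constrain any such $\tilde\nu$ to be a uniform measure on a linear subspace of integer dimension; since $s\notin\mathbb{Z}$, this produces the contradiction.
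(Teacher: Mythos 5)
Your reduction of the theorem to the structural claim $\Sym=\{0\}$ is exactly the paper's route (Theorem~\ref{soft} plus a bump function yielding $\alpha\gtrsim \mu(B(x,r))/r^s$ once the class collapses to $\{0\}$), and the ``if'' direction is the same trivial estimate. The gap is where you yourself flag it: you only obtain $T\nu(x)=0$ for $x\in\supp(\nu)$, and your proposed tangent-measure/blow-up repair is not carried out (and it is unclear how it would close, since a blow-up of an $\Omega$-symmetric measure in $\mathcal{M}_s$ is again just an $\Omega$-symmetric measure in $\mathcal{M}_s$ --- you would be back where you started without some additional input).

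The paper's proof of Theorem~\ref{sym} avoids this obstacle with a dichotomy that makes essential, different use of the real-analyticity hypothesis. Setting $f_{\nu,t}(x)=\int e^{-t^2|x-y|^2}|x-y|^k\Omega(x-y)\,d\nu(y)$, one always has $\supp(\nu)\subset\{f_{\nu,t}=0\}$, but $f_{\nu,t}$ is a real-analytic function on \emph{all} of $\R^d$. If $f_{\nu,t_0}\not\equiv 0$ for some $t_0$, then $\{f_{\nu,t_0}=0\}$ is, by the implicit function theorem applied to the stratification by the first non-vanishing derivative, locally contained in smooth $(d-1)$-surfaces; since $s>d-1$, a measure in $\mathcal{M}_s$ cannot charge such a set (Lemma~\ref{nocharge}), contradiction. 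If instead $f_{\nu,t}\equiv 0$ on $\R^d$ for \emph{every} $t$, then a Mellin integration in $t$ produces, after mollifying $\nu$ to $g\,dm_d$ and letting $\alpha\to d^-$, the identity $K*g\equiv 0$ on all of $\R^d$ (not just on $\supp(\nu)$), and this \emph{global} vanishing is what makes the Fourier argument (non-vanishing of $m$, hence $\supp(\widehat g)\subset\{0\}$, hence $g$ polynomial, hence $g=0$ by growth) available. In other words, the analyticity hypothesis is not used to parametrize $\Omega$ as $P(x)/|x|^k$ and run a blow-up argument; it is used to show that the vanishing you can prove on $\supp(\nu)$ either propagates to all of $\R^d$ or else traps $\supp(\nu)$ inside a codimension-one variety. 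That dichotomy is the missing idea in your proposal.

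One further caution on your intermediate step: deriving $\mathrm{p.v.}\int K(x-y)\,d\nu(y)=0$ for $x\in\supp(\nu)$ is fine because every truncation is exactly zero by the symmetry identity, but the phrase ``the truncations converge as $\eps\to 0$'' should not be read as absolute convergence --- for $s<d$ and $K\sim|y|^{-d}$ the integral $\int_{|x-y|<1}|K(x-y)|\,d\nu(y)$ generally diverges, and only the cancellation saves you. This is minor since your conclusion still holds, but it is another reason the paper works with the regularized density $g$ and the $\alpha<d$ regime before taking $\alpha\to d^-$.
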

Simple examples show that the non-vanishing condition on the Fourier transform cannot be relaxed, see Remark \ref{remark}.

Since small local action is a necessary condition for principal value, we observe that Theorem \ref{gennonint} provides a substantial generalization of the aforementioned result of \cite{RVT} for operators of co-dimension smaller than one.

\begin{cor}  Fix $s\in (d-1,d)$ and suppose $\Omega$ satisfies the assumptions of Theorem \ref{gennonint}.  If $\mu$ is a Borel measure  satisfying that $\overline{D}_{\mu,s}(x)\in (0,\infty)$ $\mu$-almost every $x \in \R^d$, and the principal value integral associated to $K(x) = \frac{\Omega(x)}{|x|^{s+1}}$ exists $\mu$-almost everywhere, then $\mu$ is the zero measure.
\end{cor}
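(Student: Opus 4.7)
The plan is very short because almost all the work has already been done in Theorem \ref{gennonint} and in the appendix on principal values. I would proceed by chaining together three implications and then extracting a contradiction from the positive lower bound on the density.

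First, I would invoke the fact recorded in the introduction (with proof deferred to Appendix \ref{PVappend}) that, under the finite upper density hypothesis $\overline{D}_{\mu,s}(x)<\infty$ $\mu$-a.e., the existence of the principal value integral
\[
\lim_{\eps\to 0}\int_{|x-y|>\eps}\frac{\Omega(x-y)}{|x-y|^{s+1}}\,d\mu(y)
\]
for $\mu$-a.e. $x\in\R^d$ forces the property $(\SLA)$ to hold. Since by assumption $\overline{D}_{\mu,s}(x)\in(0,\infty)$ $\mu$-a.e., and in particular $\overline{D}_{\mu,s}(x)<\infty$ $\mu$-a.e., this step applies to the measure $\mu$ in the corollary.

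Second, I would apply Theorem \ref{gennonint} directly: the assumptions on $\Omega$ are exactly those imposed in that theorem, and we have just verified both the finite upper density condition and $(\SLA)$. The theorem therefore yields that $\mu$ has zero density, i.e., $\overline{D}_{\mu,s}(x)=0$ for $\mu$-a.e. $x\in\R^d$.

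Finally, I would compare this with the standing hypothesis $\overline{D}_{\mu,s}(x)>0$ $\mu$-a.e. If $\mu$ were not the zero measure, the set
\[
E=\{x\in\R^d:\overline{D}_{\mu,s}(x)=0\}\cap\{x\in\R^d:\overline{D}_{\mu,s}(x)>0\}
\]
would have to contain $\mu$-almost every point, yet $E=\varnothing$. This contradiction forces $\mu$ to be the zero measure. There is no real obstacle here: all the analytic effort has been absorbed by Theorem \ref{gennonint} and the standard principal-value-implies-$(\SLA)$ argument of Appendix \ref{PVappend}; the corollary is merely the contrapositive-style packaging of those results for the principal value problem.
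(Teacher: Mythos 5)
Your proposal is correct and matches the argument the paper intends: it chains Proposition \ref{PVtoSLA} (principal value plus finite upper density implies $(\SLA)$) with Theorem \ref{gennonint} (under $(\SLA)$ the density is zero) and then observes the contradiction with the standing hypothesis $\overline{D}_{\mu,s}>0$ $\mu$-a.e.\ unless $\mu\equiv 0$. This is precisely the reasoning the paper sketches in the sentence immediately preceding the corollary, so there is nothing to add or correct.
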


\section{Preliminaries and Notation}
We begin by listing recurring notation throughout the text.

\subsection{Sets and functions}
\begin{itemize}
\item For $x \in \mathbb{R}^d$
	and $r>0$, $B(x,r)$ denotes the open ball centered at $x$ with radius $r$.
\item $\mathbb{S}^{d-1}$ denotes the unit sphere in $\R^d$.
\item For a set $E\subset \R^d$, we denote by $\Lip_0(E)$ the collection of Lipschitz continuous functions supported in a compact subset of $E$.  $C_0(E)$ denotes the set of continuous functions compactly supported in the interior of $E$.
\item For a function $f$ defined on an open set $U\subset \R^d$, define
$$\|f\|_{\Lip(U)} = \sup_{x,y\in U, \, x\neq y}\frac{|f(x)-f(y)|}{|x-y|}.
$$
In the case $U=\R^d$, we write $\|f\|_{\Lip}$ instead of $\|f\|_{\Lip(\R^d)}$.
	\item Let $a\in\mathbb{R}^d$ and $r>0$. We define the affine map $T_{a,r}:\R^d\to \R^d$ by $T_{a,r}(y)= \frac{y-a}{r}$. For a function $f:\R^d\to \R$ set $f_{x,r}(y)=f\bigl(\frac{|x-y|}{r} \bigl).$
      \item We define the class of functions $\mathcal{F}_{x,r}$ as follows:
      $$\F_{x,r}=\{ f : f \in \Lip_0(B(x,4r)), \|f\|_{\Lip} \leq 1/r\}.$$
\item We denote by $\mathcal{S}(\R^d)$ the Schwartz class functions in $\R^d$, and by $\mathcal{S}'(\R^d)$ the set of tempered distributions.
\end{itemize}

\subsection{Constants}

\begin{itemize}
\item Throughout the paper we shall be considering a fixed Lipschitz continuous, one homogeneous odd kernel $\Omega$.
\item  By $C>0$ we denote a constant that may change from line to line. All constants in the paper can depend on $d$, $s$, the Lipschitz norm of $\Omega$, and the $\|\Omega\|_{L^{\infty}(\mathbb{S}^{d-1})}$ without mention.
\item The symbol $A\lesssim B$ will mean that there exists a constant $C>0$ such that $A \leq CB$.
\end{itemize}

\subsection{Measures}
\begin{itemize}
\item By a measures, we shall always mean a non-negative, locally finite, Borel
	measure.
\item The Lebesgue measure in $\R^d$ is denoted by $m_d$.  The volume element $dm_d(x)$ is often denoted by $dx$.
\item We denote by $\mathcal{H}^s$ the $s$-dimensional Hausdorff measure in $\R^d$.
	\item We denote by $\supp(\mu)$ the closed support of the measure $\mu$; that is,
	$$ \supp(\mu) = \mathbb{R}^d \setminus \{ \cup B: B \text{ is an   open ball with }\mu(B)=0 \}.$$
	\item With $\mathcal{M}_s$ we denote the set of measures with $s$-power growth:
	$$\mathcal{M}_s=\{\nu: \nu(B(x,r))\leq r^s, \text{ for every } x \in \mathbb{R}^d, r>0\}.$$
   \item For $\mu$ a Borel measure, $T:\R^d\to \R^d$ a Borel measurable map, we define the push-forward measure $T_{\#} \mu$ as $T_{\#} \mu (A)=\mu(T^{-1}(A))$ for a Borel set $A\subset \R^d$.
       \item We say that a set $\Gamma$ is $n$-rectifiable if there exist Lipschitz maps $f_j: A_j \subset \R^n \to \R^d$, $j=1,2,...$ such that
$$\mathcal{H}^n\Big(\Gamma\setminus \bigcup_{j=1}^\infty f_j(A_j)\Big)=0.$$
\end{itemize}

\subsection{Weak Convergence} We say that a sequence of measures $\mu_j$ \emph{converges weakly} to a measure $\mu$ if $$\lim_{j\to \infty}\int_{\R^d} f\,d\mu_j = \int_{\R^d} f\, d\mu, \text{ for every }f\in C_0(\R^d).$$
We shall employ the following simple compactness result.

\begin{lemma}\label{weakcompact}\cite[Chapter 1]{M1} If $\mu_j$ is a sequence of measures such that, for every $R>0$,
$$\sup_j \mu_j(B(0,R))<\infty,
$$
then there is a subsequence of the measures that converges weakly to a Borel measure $\mu$. \end{lemma}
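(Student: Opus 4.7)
The plan is a standard diagonal extraction followed by an appeal to the Riesz representation theorem. First, I would set up an exhaustion of $\R^d$ by the balls $B_n := B(0,n)$, $n=1,2,\dots$, and note that for each $n$ the space $C_c(\overline{B_n}) := \{f\in C_0(\R^d): \supp(f)\subset \overline{B_n}\}$ is a separable Banach space under the uniform norm (e.g.\ polynomials restricted to $\overline{B_n}$, truncated by a cutoff and with rational coefficients, are dense by Stone-Weierstrass). Pick, for each $n$, a countable subset $\mathcal{D}_n \subset C_c(\overline{B_n})$ that is dense in the uniform norm, and let $\mathcal{D} = \bigcup_n \mathcal{D}_n$, which is countable.

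Next, I would carry out the diagonal extraction. By hypothesis $M_R := \sup_j \mu_j(B(0,R)) < \infty$ for every $R>0$. Hence for every $f \in \mathcal{D}_n$ the sequence $\int f\,d\mu_j$ is bounded in absolute value by $\|f\|_{\infty} M_{n+1}$. Enumerate $\mathcal{D} = \{f_1,f_2,\dots\}$ and, using Bolzano–Weierstrass, choose nested subsequences $\{\mu_j^{(k)}\}_j$ of $\{\mu_j\}$ so that $\int f_k\, d\mu_j^{(k)}$ converges as $j\to\infty$. The diagonal subsequence $\wt\mu_j := \mu_j^{(j)}$ then has the property that $L(f) := \lim_{j\to\infty}\int f\,d\wt\mu_j$ exists for every $f \in \mathcal{D}$.

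Now I would extend $L$ to all of $C_0(\R^d)$. Given $f \in C_0(\R^d)$ with $\supp(f) \subset \overline{B_n}$, density furnishes $g_k \in \mathcal{D}_{n+1}$ with $g_k\to f$ uniformly; since
\[
\Bigl|\int g_k\,d\wt\mu_j - \int g_\ell\,d\wt\mu_j \Bigl|\leq \|g_k-g_\ell\|_\infty \, M_{n+1},
\]
the sequence $\{\int f\,d\wt\mu_j\}_j$ is Cauchy, so $L(f) := \lim_j \int f \,d\wt\mu_j$ is well-defined and independent of the approximating sequence. The same inequality shows the extension is linear, positive (since $f\geq 0$ gives each integral nonnegative), and bounded by $\|f\|_\infty M_n$ on $C_c(\overline{B_n})$, hence continuous on $C_0(\R^d)$ with the inductive-limit topology.

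Finally I would invoke the Riesz representation theorem: every positive linear functional on $C_0(\R^d) = C_c(\R^d)$ is represented by a unique locally finite Borel measure $\mu$, giving $L(f) = \int f\,d\mu$ for all $f\in C_0(\R^d)$, which is precisely the required weak convergence $\wt\mu_j \to \mu$. The only real subtlety, and the step that deserves care, is the extension argument from the countable dense set $\mathcal{D}$ to all of $C_0(\R^d)$: one must use the uniform mass bound $M_R$ on the correct ball (one larger than $\supp(f)$) to make the Cauchy estimate work, since $C_c(\R^d)$ itself is not a Banach space under $\|\cdot\|_\infty$.
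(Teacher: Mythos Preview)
The paper does not give its own proof of this lemma; it is stated with a citation to Mattila's book \cite[Chapter 1]{M1} and used as a black box. Your diagonal-extraction-plus-Riesz-representation argument is the standard proof and is correct (modulo a harmless index slip: for $g_k\in\mathcal{D}_{n+1}$ one has $\supp(g_k)\subset\overline{B_{n+1}}\subset B(0,n+2)$, so the relevant mass bound is $M_{n+2}$ rather than $M_{n+1}$).
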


\subsection{Basic Remarks} The following three remarks will be regularly used throughout the paper.
\begin{remark}\label{linfinity} For $f\in \F_{x,r}$, and $y\in B(x, 4r)$, $y\neq x$, then $|f(y)| = |f(y)-f(x+4r\tfrac{y-x}{|y-x|})|\leq \frac{1}{r}\cdot 4r=4$, so  $\|f\|_{\infty} \leq 4$.
\end{remark}

\begin{remark}\label{intsym} Suppose $\nu$ is a measure, and $\Omega$ a kernel.  Notice that if $x \in S(\Omega,\nu)$, then for a $\varphi: \mathbb{R} \to \mathbb{R}$ is a Lipschitz continuous function with $\int_0^\infty |\varphi'| \,dr < \infty$ and $\int |\Omega(x-y)| |\varphi(|x-y|)| \,d\nu(y) < \infty$, we have
     $$ \int_{\R^d} \Omega(x-y) \varphi(|x-y|) \,d\nu(y) = -\int_0^\infty  \int_{B(x,r)} \Omega(x-y) \varphi'(r) \,d\nu(y) \,dr=0.$$
\end{remark}

\begin{remark}[Scaling of the $\alpha$-numbers]\label{scalingrem}  If $x\in \R^d$, $r>0$, and $\widetilde\mu = r^{-s}(T_{x,r})_{\#}\mu$.  Then
$$\alpha_{\mu, \Omega,s}(B(x,r)) = \alpha_{\widetilde{\mu}, \Omega,s}(B(0,1)).
$$
\end{remark}

\begin{lemma}\label{nocharge}  Fix $\mu\in \mathcal{M}_s$.  If $n\in \mathbb{N}$ satisfies $0\leq n<s$, then $\mu(\Gamma)=0$ for any $n$-rectifiable set $\Gamma$.
\end{lemma}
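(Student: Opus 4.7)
The plan is to reduce everything to a single quantitative ``dimension comparison'' lemma: a measure with $s$-power growth cannot charge any set of finite $\mathcal{H}^n$-measure when $n<s$. Once that is in hand, the definition of $n$-rectifiability given in the preliminaries lets us finish by a countable union argument.

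Step 1 (the core estimate). I would first establish the following claim: \emph{if $\mu \in \mathcal{M}_s$ and $E\subset \R^d$ satisfies $\mathcal{H}^n(E)<\infty$, then $\mu(E)=0$.} The proof is a one-line covering argument. Given $\delta>0$, use the definition of Hausdorff measure to cover $E$ by sets $U_i$ of diameter $d_i\leq \delta$ with $\sum_i d_i^n \leq \mathcal{H}^n(E)+1$. Pick $x_i\in U_i$, so $U_i\subset B(x_i,d_i)$. The growth condition gives $\mu(U_i)\leq \mu(B(x_i,d_i))\leq d_i^s$, and hence
$$
\mu(E) \;\leq\; \sum_i d_i^s \;\leq\; \delta^{s-n}\sum_i d_i^n \;\leq\; \delta^{s-n}(\mathcal{H}^n(E)+1).
$$
Since $s>n$, sending $\delta\to 0$ yields $\mu(E)=0$.

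Step 2 (reduction from rectifiable to finite $\mathcal{H}^n$). By the definition of $n$-rectifiability stated in the preliminaries, write $\Gamma = E_0 \cup \bigcup_{j\geq 1} f_j(A_j)$ with $\mathcal{H}^n(E_0)=0$ and $f_j:A_j\subset\R^n\to\R^d$ Lipschitz. The claim in Step 1 already forces $\mu(E_0)=0$. For each $j$, further decompose $A_j = \bigcup_k (A_j\cap B(0,k))$. Since $f_j$ is Lipschitz, the standard Hausdorff-measure bound
$$
\mathcal{H}^n\bigl(f_j(A_j\cap B(0,k))\bigr) \;\leq\; \|f_j\|_{\Lip}^n\,\mathcal{H}^n(A_j\cap B(0,k)) \;\leq\; C\,\|f_j\|_{\Lip}^n\, k^n
$$
is finite (using $\mathcal{H}^n \lesssim m_n$ in $\R^n$). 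Applying Step 1 to each $f_j(A_j\cap B(0,k))$ shows it has $\mu$-measure zero, and summing the countable union concludes $\mu(\Gamma)=0$.

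Edge case and obstacle. For $n=0$ the definition forces $\Gamma$ to be countable, and the $s$-growth bound $\mu(\{x\})\leq \lim_{r\to 0}r^s=0$ makes the result trivial, so one may assume $n\geq 1$ if desired. Frankly there is no real obstacle here: the entire argument is driven by the scaling gap $\delta^{s-n}\to 0$, which is exactly why the $s$-growth hypothesis is incompatible with concentration on lower-dimensional Lipschitz images. The only place one must be mildly careful is ensuring that $\mathcal{H}^n(E)$ itself is finite (not merely $\sigma$-finite) at the moment Step 1 is invoked; the decomposition $A_j=\bigcup_k A_j\cap B(0,k)$ in Step 2 is included precisely to guarantee this.
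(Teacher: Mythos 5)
Your proof is correct and follows essentially the same route as the paper: both arguments hinge on the fact that an $s$-power-growth measure is dominated by $\mathcal{H}^s$, which in turn vanishes on $n$-rectifiable sets when $s>n$. The only real difference is presentational. The paper simply cites two results from Mattila's book (the Frostman-type bound $\mu(A)\lesssim\mathcal{H}^s(A)$ and the fact that Lipschitz images of subsets of $\R^n$ have zero $\mathcal{H}^s$ measure), whereas you fuse these into a single self-contained covering estimate $\mu(E)\leq \delta^{s-n}(\mathcal{H}^n(E)+1)$ and then handle $\sigma$-finiteness explicitly via the decomposition $A_j=\bigcup_k A_j\cap B(0,k)$. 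Your version is more elementary and does not require the reader to chase references, at the cost of being somewhat longer; the logical content is the same. One cosmetic remark: a set $U_i$ of diameter $d_i$ containing a chosen point $x_i$ sits inside the closed ball $\overline{B(x_i,d_i)}$, so you should either take closed balls (and use $\mu(\overline{B(x_i,d_i)})\leq d_i^s$ by inner regularity) or absorb a factor of $2^s$ by using $B(x_i,2d_i)$ — a harmless adjustment that does not affect the conclusion.
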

\begin{proof}
Since $\mu$ satisfies the growth condition we have that $\mu(A) \leq 2^s C \mathcal{H}^s(A)$ for any $A \subset \R^d$ (see \cite{M1}, Theorem 6.9.). Now recalling that $\mathcal{H}^s(f(A))=0$  for any Lipschitz map $f:A \subset \R^n \to \R^d$ (see \cite{M1}, Theorem 7.5) and that $\mathcal{H}^s<<\mathcal{H}^n$, we achieve the result.
\end{proof}

\section{Proof of Theorem \ref{soft}}
In this section we present the proof of Theorem \ref{soft}.

Firstly, we present a key lemma which shows the relation between the transportation coefficients and the weak convergence of measures.

 \begin{lemma}\label{weak}
 Fix $x\in \R^d$.  Let $\{\mu_j\}_{j\in \mathbb{N}}$ be a sequence of measures that converges weakly to an $\Omega$-symmetric measure $\nu \in \mathcal{S}^x_{s,\Omega}$. Then for any $r>0$,
 $$ \lim_{j \rightarrow \infty} \alpha_{\mu_j,\Omega,s}(B(x,r))=0.$$
 \end{lemma}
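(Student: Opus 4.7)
The plan is to use $\nu$ itself as the competitor in the infimum defining $\alpha_{\mu_j, \Omega, s}(B(x,r))$, since by hypothesis $\nu \in \Symx$. With this choice, we obtain the upper bound
$$\alpha_{\mu_j,\Omega,s}(B(x,r)) \leq \sup_{f \in \F_{x,r}} \Bigl|\frac{1}{r^s}\int_{\R^d} \varphi\Bigl(\frac{|\cdot-x|}{r}\Bigl)\, f\, d(\mu_j - c_{\mu_j,\nu}\nu)\Bigl|,$$
and it suffices to show that the right-hand side tends to $0$ as $j\to\infty$.

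First I would analyze the behavior of $c_{\mu_j,\nu}$. Since $\varphi_{x,r} := \varphi\bigl(\frac{|\cdot-x|}{r}\bigl) \in C_0(\R^d)$, weak convergence gives $\int \varphi_{x,r}\,d\mu_j \to \int \varphi_{x,r}\,d\nu$. If $\int \varphi_{x,r}\,d\nu > 0$, then $c_{\mu_j,\nu} \to 1$; if $\int \varphi_{x,r}\,d\nu = 0$, then $c_{\mu_j,\nu}=0$ by definition, and since $\varphi_{x,r}\geq 0$ we also have $\varphi_{x,r}\equiv 0$ $\nu$-a.e., so $\int \varphi_{x,r}\,f\,d\nu=0$ for every $f$. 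In either case, for each fixed $f\in \F_{x,r}$, the quantity $\int \varphi_{x,r} f\, d(\mu_j - c_{\mu_j,\nu}\nu)$ tends to $0$.

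The main obstacle is upgrading this pointwise convergence to convergence that is uniform in $f\in \F_{x,r}$. For this I would invoke Arzel\`{a}--Ascoli: the family
$$\mathcal{G} = \bigl\{ \varphi_{x,r}\, f : f \in \F_{x,r}\bigl\}$$
is uniformly bounded (by Remark \ref{linfinity}, $\|f\|_\infty \leq 4$, and $\|\varphi_{x,r}\|_\infty\leq 1$), uniformly Lipschitz, and uniformly supported in $\overline{B(x,4r)}$, hence relatively compact in $C(\overline{B(x,4r)})$ with the sup norm. Weak convergence also forces $\sup_j \mu_j(B(x,5r))<\infty$ by testing against a continuous bump that majorizes $\mathbf{1}_{B(x,5r)}$, and $\nu(B(x,4r))\leq (4r)^s$ since $\nu\in \mathcal{M}_s$. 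A standard $\eps$-net argument then concludes: fix $\eps>0$, choose a finite $\eps$-net $g_1,\ldots,g_N\in \mathcal{G}$ for the sup norm, and estimate
$$\sup_{g\in \mathcal{G}}\Bigl|\int g\, d(\mu_j - c_{\mu_j,\nu}\nu)\Bigl| \leq \eps\bigl(\sup_j \mu_j(B(x,4r)) + |c_{\mu_j,\nu}|\,\nu(B(x,4r))\bigl) + \max_{1\leq k\leq N}\Bigl|\int g_k\, d(\mu_j - c_{\mu_j,\nu}\nu)\Bigl|.$$
The first term is $O(\eps)$ uniformly in $j$ since $c_{\mu_j,\nu}$ is bounded, and the maximum on the right tends to $0$ as $j\to\infty$ by the pointwise convergence established above. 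Letting first $j\to\infty$ and then $\eps\to 0$ gives the desired conclusion.
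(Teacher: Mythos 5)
Your proposal is correct and follows essentially the same route as the paper's proof: take $\nu$ itself as the competitor in the infimum, analyze the two cases of $c_{\mu_j,\nu}$ via weak convergence, and upgrade pointwise convergence to uniform convergence over the test class by a compactness / finite $\epsilon$-net argument. The only cosmetic differences are that the paper first normalizes to $x=0$, $r=1$ via the scaling remark and takes the $\epsilon$-net in $\F_{0,1}$ itself rather than in $\{\varphi_{x,r}f\}$, and bounds the error term using $\Lambda=\sup_j\int\varphi\,d\mu_j$ instead of $\sup_j\mu_j(B(x,5r))$; these are interchangeable.
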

 \begin{proof}
 Without loss of generality, we may suppose that $x=0$ and $r=1$ (Remark \ref{scalingrem}).   Certainly $\Lambda := \sup_j\Bigl| \int_{\R^d} \varphi \,d\mu_j \Bigl| < \infty.$

 Fix $\varepsilon>0$. Since the space $\F_{0,1}$  is a relatively compact subset of $C_0(B(0, 4))$, we can find a finite $\varepsilon$-net $f_1,\dots,f_n \in \F_{0,1}$ for some $n\in \mathbb{N}$ (i.e. $\min_{\beta\in \{1,\dots, n\}}\|f-f_{\beta}\|_{\infty}<\eps$ for every $f\in \F_{0,1}$).

 Set $c_j:=c_{\mu_j,\nu}$.  Then by definition, if $\int_{\R^d}\varphi\,d\nu=0$ then $c_j=0$ for every $j\in \mathbb{N}$, while if $\int_{\R^d}\varphi\, d\nu\neq 0$, then $c_j = \frac{\int_{\R^d}\varphi d\mu_j}{\int_{\R^d}\varphi d\nu}\to 1$ as $j \to \infty$.   Either way, there exists $j_0$ such that for every $j> j_0$
 $$ \max_{\beta\in \{1,\dots,n\}} \left| \int_{\R^d} f_{\beta} \varphi \,d(\mu_j-c_j \nu) \right| \leq \varepsilon.$$
 Let $f \in \F_{0,1}$. With $\beta\in \{1,\dots,n\}$ such that $\|f-f_{\beta}\|_{\infty} \leq \varepsilon,$ write
 $$ \Bigl| \int_{\R^d} f\varphi \,d(\mu_j-c_j \nu) \Bigl| \leq   \Bigl| \int_{\R^d} (f-f_{\beta})\varphi \,d (\mu_j-c_j \nu)\Bigl| + \Bigl| \int_{\R^d} f_{\beta} \varphi \,d (\mu_j-c_j\nu) \Bigl|,$$
 so, for sufficiently large $j$,
 $$ \Bigl| \int_{\R^d} f \varphi \,d (\mu_j-c_j \nu) \Bigl|\leq  (\Lambda+2)\varepsilon.$$
 Finally,
 \begin{align*}
 \inf_{\nu' \in \mathcal{S}^x_{s,\Omega}}  \sup_{f\in \F_{0,1}}
 \Bigl|\int_{\R^d}  f \varphi  \;
\,d (\mu_i - c_{\mu, \nu'} \nu') \Bigl| & \leq  \sup_{f\in \F_{0,1}}
 \Bigl| \int_{\R^d} f \varphi \,d(\mu_i-c_{\mu,\nu}\nu) \Bigl| \\ & \leq (\Lambda+2)\eps,
 \end{align*}
 and the lemma is proved.
 \end{proof}
  Now we proceed with the proof of the theorem which uses the machinery of tangent measures introduced by Preiss \cite{P}, and then used in relation to singular integrals by Mattila \cite{M} and Vihtil\"a \cite{V}.

\begin{proof}[Proof of Theorem \ref{soft}]
Assume first that the property $(\SLA)$ holds. 
We can express $\mathbb{R}^d$ as $\bigcup_{k=1}^{\infty} F_k$, except for a set of $\mu$-measure zero, where
$$ F_k := \Bigl\{x \in \mathbb{R}^d : \frac{\mu(B(x,r))}{r^s} \leq k\text{ for every }r\leq \frac{1}{k} \Bigl\}.$$
Fix one of these sets $F_k$ with $\mu(F_k)>0$.  Using Egoroff's theorem we decompose $F_k$ (except for a $\mu$-measure zero set) into Borel sets in which for every $n\in \mathbb{N}$ the convergence $ \frac{1}{r^{s+1}} \int_{\R^d} \Omega(x-y) \eta^{1/n}\Bigl(\frac{|x-y|}{r}\Bigl)\,d\mu(y)\to 0$ as $r\to 0$ is uniform. We look  at any such set $E$ and its intersection with $F_k$.

  Pick $a_0$ to be a density point of $E \cap F_k$ (so $\frac{\mu(B(a_0, r)\cap(E\cap F_k))}{\mu(B(a_0,r))}\to 1$ as $r\to 0$). We claim that $\alpha_{\mu, \Omega,s}(B(a_0,r))\to 0$ as $r\to 0$. Suppose not.  Then there exists a sequence $\{r_j\}_{j\geq1}$ of positive  numbers and $\delta>0$ such that $\lim_j r_j =0$ and $\alpha_{\mu,\Omega,s}(B(a_0,r_j)) > \delta$ for every $j \in \mathbb{N}$.

  We claim that for every $j$, \begin{equation}\label{nondeg}\int_{\R^d}\varphi_{a,r_j}d\mu \geq \frac{1}{8}\delta r^s_j.\end{equation} To see this, first recall that if $f \in \F_{a_0,r_j}$, then $\|f\|_{\infty} \leq 4$ (Remark \ref{linfinity}).  Now let $\nu$ be any measure in 
  $\mathcal{S}_{s,\Omega}^{a_0}$, and choose $f \in \F_{a_0,r_j}$ with $\left|\int f \varphi_{a_0,r_j}
\frac{\,d (\mu - c_{\mu,\nu} \nu)}{r_j^s} \right| \geq \delta$. Then
  \begin{align*}
      \delta\leq  \Bigl| \int_{\R^d} f \varphi_{a_0,r_j}  \frac{\,d(\mu - c_{\mu,\nu} \nu)}{r_j^s} \Bigl| & \leq  \int_{\R^d} |f| \varphi_{a_0,r_j}  \frac{\,d\mu}{r_j^s} + \frac{c_{\mu,\nu}}{r_j^s} \int_{\R^d} |f| \varphi_{a_0,r_j}\,d\nu
      \\& \leq 2\|f\|_{\infty}\frac{1}{r_j^s}\int_{\R^d}\varphi_{a_0,r_j}d\mu,
  \end{align*}
  as required.

   Our next step is to form the scaled measures $\mu_j:=\frac{T_{a_0,r_j\#}\mu}{3^sk r_j^s}$. Then (\ref{nondeg}) becomes
   $\int_{\R^d}\varphi d\mu_j \geq \frac{\delta}{3^s8k}.$  Since $a_0\in F_k$, we certainly have $$ \sup_j \frac{\mu_j (B(0,R))}{R^s}= \sup_{j}\frac{\mu(B(a_0,r_jR))}{3^sk R^sr_j^s} <\infty \text{ for any }R>0.$$ Passing to a subsequence if necessary, we may assume that  $\mu_j$ converges weakly to some Borel measure $\nu$.  We shall show that $\nu\in \mathcal{S}^0_{s,\Omega}$.

  First observe that $\nu$ is a non-zero measure, since
  $$ \int_{\R^d}\varphi d\nu \geq \frac{\delta}{3^s8k}.$$

   Since $a_0$ is a density point of $E \cap F_k$, we have that, for any fixed $M$
  \begin{equation}\label{outsidenomeas}\lim_{j\to \infty}\frac{\mu(B(a_0, M r_j)\backslash (E\cap F_k))}{r_j^s}=0.
  \end{equation}
  Consequently, if $x\in \supp(\nu)$ and $\rho>0$, then
  \begin{equation}\begin{split}\label{maincalc}
    0<\nu(B(x,\rho)) &\leq \liminf_{j \rightarrow \infty} \frac{(T_{a_0,r_j\#} \mu)(B(x,\rho))}{3^sk r_j^s}\\&= \liminf_{j \rightarrow \infty} \frac{\mu(B(a_0+r_jx, \rho r_j))}{3^sk r_j^s} \\
  &\stackrel{(\ref{outsidenomeas})}{=}\liminf_{j \rightarrow \infty} \frac{\mu(E\cap F_k \cap B(a_0+r_jx, \rho
  r_j))}{3^sk r_j^s}
  \end{split}\end{equation}
 In particular, $E \cap F_k \cap B(a_{0}+r_jx,\rho r_j)$ is non-empty
  for all sufficiently large $j.$  Thus we may select a sequence $a_j \in E\cap F_k$  such that
  \begin{equation}\label{xjconv} x_j = \frac{a_j-a_0}{r_j} \rightarrow x.\end{equation}
  Consequently, if $\rho'>\rho$, then appealing to (\ref{maincalc}),
  $$\nu(B(x,\rho))\leq \liminf_{j \rightarrow \infty}\frac{(T_{a_0,r_j\#} \mu)(B(x_j,\rho'))}{3^sk r_j^s} = \liminf_{j \rightarrow \infty} \frac{\mu(B(a_j, \rho' r_j))}{3^sk r_j^s}\leq \frac{\rho'^s}{3^s},
  $$
  where it is used that $\rho'r_j<1/k$ if $j$ is sufficiently large.  Therefore $\nu(B(x,\rho))\leq \frac{\rho^s}{3^s}$ for any $x\in \supp(\nu)$ and $\rho>0$.  This readily implies that $\nu(B(z,r))\leq r^s$ for every $z\in \R^d$ and $r>0$ (i.e. $\nu\in \mathcal{M}_s$).  Indeed, if $\nu(B(z,r))=0$ then there is nothing to prove.  Otherwise pick $z'\in \supp(\nu)\cap B(z,r)$, then $\nu(B(z,r))\leq \nu(B(z', 3r))\leq \frac{(3r)^s}{3^s}=r^s$, as required.

  Our next goal is to show that $\nu$ is a symmetric measure. Fix $x\in \supp(\nu)$ and $x_j$ as in (\ref{xjconv}).  Then for $\rho>0$ and $n\in \mathbb{N}$,
  \begin{align*}
     \int_{\R^d} &\Omega(x-y) \eta^{1/n}\left(\frac{|x-y|}{\rho}\right) \frac{\,d \nu (y)}{\rho^s}  = \lim_{j\to\infty}\int_{\R^d} \Omega(x_j-y) \eta^{1/n}\left(\frac{|x_j-y|}{\rho}\right) \frac{\,d \mu_j (y)}{\rho^s} \\
     &= \lim_{j \rightarrow \infty} \frac{1}{3^sk r_j^s} \int_{\R^d} \Omega(x_j-y) \eta^{1/n}\Bigl(\frac{|y-x_j|}{\rho}\Bigl)  \frac{\,d T_{a_0,r_j \#} \mu(y)}{\rho^s}
     \\
 & =\lim_{j \rightarrow \infty} \frac{1}{3^sk} \int_{\R^d} \Omega\left(\frac{y-a_j}{r_j}\right) \eta^{1/n}\left( \frac{|y-a_j|}{\rho r_j}\right) \frac{\,d \mu
  (y)}{\rho^sr_j^s} =0.
 \end{align*}
As $\chi_{B(x,\rho)}(y)$ is the monotone increasing limit of the sequence $\eta^{1/n}\bigl(\frac{|x-y|}{\rho}\bigl)$ as $n\to \infty$, we infer that
 $$ \int_{B(x,\rho)} \Omega(x-y) \frac{ \,d\nu(y)}{\rho^s} = 0, \text{for every }\rho>0,$$
  i.e. $x\in S(\Omega, \nu)$.  Since $x$ was chosen to be any point on $\supp(\nu)$ we have that $\nu \in \mathcal{S}_{s,\Omega}$.  The same calculation, with $a_j=a_0$ and $x_j=x=0$ for every $j$, shows that $0 \in S(\Omega,\nu)$.  We have verified that $\nu \in \mathcal{S}^{0}_{s,\Omega}$.  Consequently, Lemma \ref{weak} yields that $\lim_{j\to\infty}\alpha_{\mu_j,\Omega,s}(B(0,1))=0$.  However, by assumption $\alpha_{\mu_j,\Omega,s}(B(0,1))\geq \frac{\delta}{3^s8k}$ for every $j$.  This contradiction concludes the proof of this direction of the theorem.

   Now we proceed with the `if' statement of the theorem. Fix $\tau\in (0,1)$,  $\eta^\tau \in \Gamma$, and $\varepsilon >0$.  By assumption, there exists a positive number $r_0\in (0,1)$ such that for every $r < r_0$, $\alpha_{\mu,\Omega,s}(B(x,r)) <
\varepsilon \tau$. Hence for every $r < r_0$ we can find a symmetric measure $\nu$ in $\Symx$ 
such that
$$  \sup_{f \in \F_{x,r}}
\left| \int_{\R^d} f(y) \varphi\left( \frac{|x-y|}{r} \right)
\frac{\,d (\mu - c_{\mu,\nu} \nu)(y)}{r^{s}} \right| <\varepsilon \tau.$$
Insofar as $x\in S(\Omega, \nu)$, $\int_{\R^d}\Omega(x-y)\eta^{\tau}\bigl(\frac{|x-y|}{r}\bigl)d\nu(y)=0$, so
\begin{align*}
  &\left|\int_{\mathbb{R}^d}  \Omega(x-y) \eta^\tau\Bigl(\frac{|x-y|}{r}\Bigl) \frac{\,d \mu(y)}{r^{s+1}} \right|\\
&= \left|\int_{\mathbb{R}^d} \Omega(x-y) \eta^\tau\Bigl(\frac{|x-y|}{r}\Bigl) \frac{\,d (\mu- c_{\mu,\nu} \nu)(y)}{r^{s+1}}
\right| \\
 &=   \left|\int_{\mathbb{R}^d} \Omega(x-y)\eta^\tau\Bigl(\frac{|x-y|}{r}\Bigl)\varphi_{x,r}(y)\frac{\,d (\mu- c_{\mu,\nu}\nu)(y)}{r^{s+1}}
\right| \lesssim C(\tau)\varepsilon,
\end{align*}
where it has been used in the final inequality that the function $y\mapsto \frac{1}{r}\Omega(x-y)\eta^{\tau}\bigl(\frac{|x-y|}{r}\bigl)$ is $\frac{C}{\tau \cdot r}$-Lipschitz.  Letting $\eps\to 0$ we conclude that $(\SLA)$ holds.  This finishes the proof of Theorem \ref{soft}.
\end{proof}

\section{Examining the class $\mathcal{S}_{s,\Omega}$ for particular choices of kernel $\Omega$}\label{specific}

We begin with a simple lemma.

\begin{lemma}\label{nosup}
Suppose $\Omega : \mathbb{R}^d\setminus \{0\} \mapsto \mathbb{R}^d \setminus \{0\}$ is a continuous one-homogeneous kernel.  Fix a measure $\nu$, $x \in S(\Omega, \nu) \setminus \supp(\nu)$ and $d= \dist(x, \supp(\nu))$.

 Then the set $\overline{ B(x,d)} \cap \, \supp(\nu)$ contains at least $2$ points.
\end{lemma}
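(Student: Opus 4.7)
The plan is to argue by contradiction. Since $\supp(\nu)$ is a closed non-empty set and $x\notin\supp(\nu)$, the distance $d=\dist(x,\supp(\nu))>0$ is achieved, so $\overline{B(x,d)}\cap\supp(\nu)$ contains at least one point $p$, with $|x-p|=d$. Suppose, toward contradiction, that $\{p\}$ is the entire intersection.

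Next I would localize the support of $\nu$ near $p$ at scale slightly larger than $d$. Fix $\delta>0$. I claim there exists $\eps_0>0$ such that
\[
\supp(\nu)\cap B(x,d+\eps_0)\subset B(p,\delta).
\]
If not, one produces a sequence $y_n\in\supp(\nu)$ with $|y_n-x|\to d$ and $|y_n-p|\geq\delta$. The sequence is bounded, so extract a subsequence converging to some $y^\ast$. Then $y^\ast\in\supp(\nu)$ (closedness) and $|y^\ast-x|\leq d$, hence $y^\ast\in\overline{B(x,d)}\cap\supp(\nu)=\{p\}$, contradicting $|y^\ast-p|\geq\delta$.

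With this localization in hand, the symmetry condition $x\in S(\Omega,\nu)$ gives, for every $\eps\in(0,\eps_0)$,
\[
0=\int_{B(x,d+\eps)}\Omega(x-y)\,d\nu(y)=\int_{B(x,d+\eps)\cap B(p,\delta)}\Omega(x-y)\,d\nu(y).
\]
Since $\Omega$ is continuous and $\Omega(x-p)\neq 0$ (as $x\neq p$ and $\Omega$ is non-vanishing away from the origin), choosing $\delta$ small enough one has $|\Omega(x-y)-\Omega(x-p)|\leq \tfrac12|\Omega(x-p)|$ on $B(p,\delta)$. A triangle inequality then yields
\[
|\Omega(x-p)|\,\nu(B(x,d+\eps))\leq \tfrac12|\Omega(x-p)|\,\nu(B(x,d+\eps)).
\]
The contradiction will be sealed once I know $\nu(B(x,d+\eps))>0$, which follows because $p\in\supp(\nu)$ and $p\in B(x,d+\eps)$ (an open set), so some open neighborhood of $p$ lies in $B(x,d+\eps)$ and carries positive $\nu$-mass.

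The only subtlety — which is really the heart of the argument — is the localization step: ensuring that, just past the distance $d$, all the support seen by $\nu$ inside $B(x,d+\eps)$ is concentrated arbitrarily close to the unique nearest point $p$. That step relies essentially on the closedness of $\supp(\nu)$ and a routine compactness extraction, after which continuity of $\Omega$ and non-vanishing of $\Omega(x-p)$ close the loop.
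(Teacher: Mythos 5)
Your proof is correct and takes essentially the same route as the paper: localize the support near the unique nearest point $p$, then use continuity and non-vanishing of $\Omega$ to show the symmetry integral cannot vanish. The only cosmetic difference is that the paper singles out a scalar component $\Omega_j$ with $\Omega_j(x_0)\neq 0$, whereas you work directly with the vector-valued $\Omega$ and a norm estimate, which in fact sidesteps the implicit sign-definiteness issue more cleanly; you also spell out the "elementary metric topology" localization step in full.
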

\begin{proof}
Without loss of generality, we take $x=0$. For the sake of deriving a contradiction, suppose that there exists $x_0 \in \mathbb{R}^d$ such that $\overline{B(0,d)} \cap \, \supp(\nu) = \{x_0\}.$

Since $\Omega$ is continuous and does not vanish, there exists a component $j$ and a positive number $\delta$ such that $|\Omega_j(\omega) | \geq \frac{1}{2} |\Omega_j(x_0)|$ for every $\omega \in B(x_0,\delta)$. 
By elementary metric topology, there exists $\varepsilon \in (0, \delta)$ such that 
$$\supp(\nu) \cap B(0,d+\varepsilon) \subset B(x_0,\delta).$$ Then
\begin{equation*} \begin{split}
\left| \int_{B(0,d+\varepsilon)} \Omega_j(y -x) \,d\nu(y)\right| & =  \left|\int_{B(0,d+\varepsilon)\cap \supp(\nu)} \Omega_j(y -x )\,d\nu(y) \right| \\
 & \geq \frac{1}{2} |\Omega_j(x_0)| \nu(B(x_0,\varepsilon))>0.
\end{split} \end{equation*}
But this is impossible, since $0 \in S(\Omega,\nu)$.
\end{proof}

The lemma immediately yields the following useful corollary.

\begin{cor}\label{nondegplane}
Let $\Omega : \mathbb{R}^d\setminus \{0\} \mapsto \mathbb{R}^d \setminus \{0\}$ be a continuous one-homogeneous kernel.  If $\nu =  \mathcal{H}^s_{|L}$ for an $s$-plane $L$, then $S(\Omega,\nu) = L$.
\end{cor}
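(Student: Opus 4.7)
The plan is to prove the two inclusions $L \subset S(\Omega,\nu)$ and $S(\Omega,\nu) \subset L$ separately. For the forward inclusion I would fix $x \in L$ and exploit the reflection symmetry of $\nu$ about $x$. The map $R_x(y) = 2x - y$ is an isometry of $\R^d$ preserving both $B(x,r)$ and the $s$-plane $L$ (since $x \in L$); consequently $R_x{}_\# \nu = \nu$. Substituting $z = R_x(y)$ in the defining integral and using the standing oddness hypothesis on $\Omega$, one obtains
\[
\int_{B(x,r)} \Omega(x-y)\,d\nu(y) = \int_{B(x,r)} \Omega(z-x)\,d\nu(z) = -\int_{B(x,r)} \Omega(x-z)\,d\nu(z),
\]
so the integral vanishes and $x \in S(\Omega,\nu)$.

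For the reverse inclusion I would argue by contradiction. Suppose $x \in S(\Omega,\nu) \setminus L$. Since $\supp(\nu) = L$, the point $x$ lies outside $\supp(\nu)$, so Lemma \ref{nosup} applies with $d = \dist(x,L) > 0$ and forces $\overline{B(x,d)} \cap L$ to contain at least two points. On the other hand, $L$ is a closed affine (indeed linear) subspace, so the nearest-point projection of $x$ onto $L$ is unique; hence $\overline{B(x,d)} \cap L$ consists of a single point. This contradiction yields $x \in L$.

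The step I expect to require the most care is the first inclusion, where one must verify that $R_x$ restricts to an isometry of $L$ — which is immediate from $x \in L$ together with $L$ being closed under the affine combination $y \mapsto 2x - y$ — and that oddness of $\Omega$ (the standing hypothesis of the paper) is what converts the geometric symmetry of $\nu$ into the required cancellation in the integral. The second inclusion is essentially a direct corollary of Lemma \ref{nosup} applied to the specific geometry of an $s$-plane, so no additional work is anticipated there.
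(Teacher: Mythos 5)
Your proof is correct and follows essentially the same route as the paper: the inclusion $S(\Omega,\nu)\subset L$ is exactly the intended application of Lemma \ref{nosup} together with uniqueness of the nearest-point projection onto a plane, while the reverse inclusion $L\subset S(\Omega,\nu)$ is the routine reflection-plus-oddness computation that the paper leaves implicit (recall $\Omega$ is odd by the standing hypothesis of Section 2, even though the corollary does not restate it). One small slip: an $s$-plane need not pass through the origin, so ``affine'' rather than ``linear'' is the right word, but the argument is unaffected.
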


\subsection{The Riesz kernel} In view of Theorem \ref{soft}, Theorem \ref{rieszthm} is an immediate consequence of the following description of $\S_{s, \Omega}$.
\begin{prop}\label{Rieszsymprop} Suppose that $\Omega(x)=x$ is the Riesz kernel, then
 $$\S_{s,\Omega}=\begin{cases}\{\nu\in \mathcal{M}_s:\nu = c \cdot \mathcal{H}^s_{|L}, L \text{ an affine }s\text{-plane},0\leq c\}, \text{ if }s\in \mathbb{Z},\\\{\text{the zero measure}\} \text{ if }s\notin \mathbb{Z}.\end{cases}$$ Moreover, if $\nu\in \S_{s,\Omega}$, then $S(\Omega, \nu)=\supp(\nu)$.
\end{prop}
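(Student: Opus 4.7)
The plan is to prove both inclusions in the displayed characterization of $\Sym$, and then derive the ``moreover'' from Corollary~\ref{nondegplane}.

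For the forward containment when $s \in \mathbb{Z}$, I would fix an affine $s$-plane $L$ and let $\omega_s := \mathcal{H}^s(B(0,1) \cap \mathbb{R}^s)$. For $c \in [0, \omega_s^{-1}]$, the measure $\nu = c\,\mathcal{H}^s_{|L}$ satisfies $\nu(B(x,r)) \leq \omega_s c\, r^s \leq r^s$ for every $x \in \R^d$ and $r > 0$, so $\nu \in \mathcal{M}_s$. For any $x \in L$, the reflection $y \mapsto 2x - y$ preserves $\nu|_{B(x,r)}$ and sends $x-y$ to $-(x-y)$, which forces $\int_{B(x,r)}(x-y)\,d\nu(y) = 0$. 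Thus $L \subset S(\Omega,\nu)$, so in particular $\supp(\nu) \subset S(\Omega,\nu)$ and $\nu \in \Sym$.

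The reverse containment is the heart of the matter. Given a non-zero $\nu \in \Sym$, I would invoke the Mattila--Preiss rigidity theorem for Riesz-symmetric measures, in the form stated in \cite{JNT}: any non-zero Riesz-symmetric measure in $\mathcal{M}_s$ must have $s \in \mathbb{Z}$ and be a positive constant multiple of $\mathcal{H}^s_{|L}$ for some affine $s$-plane $L$. This simultaneously yields the reverse containment when $s \in \mathbb{Z}$ and the triviality of $\Sym$ when $s \notin \mathbb{Z}$. This step is the principal obstacle of the proof; morally, the argument proceeds by tangent measure analysis (every tangent measure of a Riesz-symmetric measure is again Riesz-symmetric), combined with a connectedness argument on the cone of tangent measures and Marstrand-type density results that enforce integrality of $s$. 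Since the statement is available in the literature, the strategy here is to quote it.

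The ``moreover'' is then immediate from Corollary~\ref{nondegplane}: a non-zero $\nu \in \Sym$ is, by what has just been established, a positive multiple of $\mathcal{H}^s_{|L}$ for some affine $s$-plane $L$, and since the Riesz-symmetry condition is translation-invariant and homogeneous in $\nu$, the corollary yields $S(\Omega,\nu) = L = \supp(\nu)$.
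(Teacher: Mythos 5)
Your forward containment (checking that suitably normalized flat measures are symmetric and lie in $\mathcal{M}_s$) and your handling of the ``moreover'' clause via Corollary \ref{nondegplane} are fine and match the paper. The problem is the reverse containment, which you dispatch by quoting a rigidity theorem ``in the form stated in \cite{JNT}'' that is stronger than what \cite{JNT} actually provides. Proposition 4.7 of \cite{JNT} gives only a dichotomy, not the full rigidity statement you attribute to it: either $\supp(\nu)$ is not contained in an $\lfloor s\rfloor$-plane, in which case $\nu(B(x_0,R))/R^{\lfloor s\rfloor+1-\varepsilon}\to\infty$ for some $x_0\in\supp(\nu)$ and every $\varepsilon>0$; or $\supp(\nu)\subset L$ for an $\lfloor s\rfloor$-plane $L$, in which case either $\nu=c\,\mathcal{H}^{\lfloor s\rfloor}_{|L}$ or $\supp(\nu)$ is $(\lfloor s\rfloor-1)$-rectifiable. (There is also a mild translation step: \cite{JNT} works with the weaker notion of $\varphi$-symmetry, which every symmetric measure enjoys by Remark \ref{intsym}.) The actual content of this proposition is precisely to feed the growth bound $\nu\in\mathcal{M}_s$ into that dichotomy: choosing $\varepsilon<\lfloor s\rfloor+1-s$ rules out the superpolynomial-growth branch; Lemma \ref{nocharge} shows a measure in $\mathcal{M}_s$ assigns zero mass to $(\lfloor s\rfloor-1)$-rectifiable sets, ruling out the second branch; and, for $s\notin\mathbb{Z}$, Lemma \ref{nocharge} applied again (with $n=\lfloor s\rfloor<s$) shows $\nu$ cannot charge any $\lfloor s\rfloor$-plane either, forcing $\nu=0$. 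None of these reductions appear in your proposal.

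Your heuristic that ``Marstrand-type density results enforce integrality of $s$'' is also misleading for this proposition. The paper deliberately works with no lower density hypothesis, so Marstrand's theorem is not available; integrality of $s$ is extracted here by the elementary observation that a non-trivial multiple of $\mathcal{H}^{\lfloor s\rfloor}$ on an $\lfloor s\rfloor$-plane already violates the $r^s$ growth bound whenever $\lfloor s\rfloor<s$. If you want to keep the black-box style, you should quote the dichotomy form of Proposition 4.7 in \cite{JNT} accurately and then supply the short growth-bound arguments that close the gap.
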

\begin{proof}
  Let $\nu \in \S_{s,\Omega}$ be non-zero. From Proposition 4.7 in \cite{JNT} we see that\footnote{The paper \cite{JNT} uses a weaker notion called $\varphi$-symmetry.  That every symmetric measure is $\varphi$-symmetric follows immediately from Remark \ref{intsym}.} if $\supp(\nu)$ is not contained in an $\lfloor s
  \rfloor$-plane, then for any $\varepsilon >0$, we have that
  $$\lim_{R \rightarrow \infty} \frac{\nu(B(x_0,R))}{R^{\lfloor s \rfloor+1-\varepsilon}}=\infty,$$
  for some $x_0 \in \supp(\nu)$.
  But if $\eps<\lfloor s\rfloor +1-s$, then this estimate contradicts the growth assumption
  \begin{equation}\label{growth}\nu(B(x,r))\leq r^s\text{ for every }x\in \R^d\text{ and }r>0\end{equation} when $r$ is large.

  On the other hand, if $\supp(\nu) \subset L$ for some $\lfloor s \rfloor$-plane $L$,
  then Proposition 4.7 of \cite{JNT} states that either $\nu=c \mathcal{H}^{\lfloor s\rfloor}_{|L}$ or $\supp(\nu)$ is $(\lfloor s
  \rfloor-1)$-rectifiable.   But (\ref{growth}) also implies that $\nu(\Gamma)=0$ for every $(\lfloor s
  \rfloor-1)$-rectifiable set $\Gamma$ (Lemma \ref{nocharge}), and also, if $s\notin \mathbb{Z}$, then $\nu( L)=0$ for any $\lfloor s \rfloor$-plane $L$ (Lemma \ref{nocharge} again).  The description of the set $\S_{s, \Omega}$ is complete.

  The second conclusion of the proposition follows immediately from Corollary \ref{nondegplane}.
\end{proof}

\subsection{The Huovinen kernel}
In his thesis, P. Huovinen \cite{H} developed tools to understand the symmetric measures associated to kernels of the form $$\Omega: \mathbb{C}\setminus \{0\} \mapsto \mathbb{C}\setminus \{0\},$$ $$\Omega(z)=\frac{z^k}{|z|^{k-1}},$$ where $k$ is odd.
First of all, we present the classification of such measures, due to Huovinen \cite{H}, see Theorem 3.26 in \cite{H}.
\begin{thm} \label{Huo}
If $\nu$ is a non-zero $\Omega$-symmetric measure, then one of the following is satisfied:
\begin{enumerate}
\item[(A)] $\nu= c \mathcal{H}^1_{|L}$ for some line $L$ and $c>0$.
\item[(B)] There exist a line $L$, $a \in \mathbb{C}\setminus \{0\}$, and $c,d >0$ such that
 $$\nu= c \sum_{j=-\infty}^\infty \mathcal{H}^1_{|(L + (2j+1)a)} + d  \sum_{j=-\infty}^\infty \mathcal{H}^1_{|(L + 2ja)} .$$
 \item[(C)] There exist $M$ depending only on $\Omega$  and $5 \leq m \leq M$, $x \in \mathbb{C}$, $\alpha \in [0,2\pi)$ and positive numbers $c_0,...,c_{m-1}$ such that
$$\nu =  \sum_{j=0}^{m-1}c_j  \mathcal{H}^1 \lfloor \Lambda'_j , $$ where
$$\Lambda'_j=\{y \in \mathbb{C}: y= x+te^{i(2\pi j/m+\alpha)}, t \in \mathbb{R}^+\}.$$
\item[(D)] Thre exist $0 \leq \alpha < 2\pi$ and $b>0$ such that
$$\supp(\nu) = \bigcup_{j=0}^2 \bigcup_{l=-\infty}^\infty \{y \in \mathbb{C}: y=b\cdot l\cdot e^{i(\alpha + 2j\pi/3 + \pi/2)} + te^{i(\alpha+2j\pi/3)}, t \in \mathbb{R}\}.$$
\item[(E)] $\nu$ is a discrete measure.
\item[(F)] $\nu= \mathcal{L}^2_{|P}$ for some polynomial $P: \mathbb{R}^2 \mapsto \mathbb{R}$.

\end{enumerate}
\end{thm}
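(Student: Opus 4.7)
The starting point is to rephrase the symmetry condition as a vanishing Fourier-mode condition on angular distributions. Writing $y-x = \rho e^{i\theta}$ and using that $\Omega(\rho e^{i\theta}) = \rho e^{ik\theta}$ together with the oddness of $k$, the condition $x \in S(\Omega, \nu)$ becomes
$$\int_0^r \rho \, \widehat{\sigma}_{x,\rho}(k) \, d\rho = 0 \text{ for all } r > 0,$$
where $\sigma_{x,\rho}$ is the angular disintegration of $\nu$ around $x$ at radius $\rho$, and $\widehat{\sigma}_{x,\rho}(k) = \int e^{ik\theta}\,d\sigma_{x,\rho}(\theta)$. Differentiating in $r$ forces $\widehat{\sigma}_{x,\rho}(k) = 0$ for a.e.\ $\rho>0$: at every point of the support, the $k$-th Fourier mode of the angular distribution of $\nu$ vanishes at almost every scale. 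This rigid algebraic constraint drives the rest of the proof.

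The plan is to carry out a case analysis based on a local dimension dichotomy implemented via tangent measures. At $\nu$-a.e.\ point $x$, extract a tangent measure $\nu_0$ (Preiss), which inherits $\Omega$-symmetry and additional dilation invariance. Three cases arise: (i) $\nu_0$ is absolutely continuous with respect to Lebesgue measure on some open set; (ii) $\supp(\nu_0)$ has positive one-dimensional content but is purely singular with respect to Lebesgue measure; (iii) $\nu_0$ is purely atomic. Case (iii) yields case (E) after verifying that the discrete local structure persists under the symmetry condition. In case (i), the Fourier vanishing $\widehat{\sigma}_{x,\rho}(k) = 0$ at every $x$ translates, via the density, into a linear constraint whose smooth solutions form the space of polynomial densities, giving case (F).

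The bulk of the work lies in case (ii). First, by examining the vanishing Fourier condition on small annuli around a fixed $x_0 \in \supp(\nu_0)$, one shows that the angular support of $\nu_0$ near $x_0$ is discrete, so that $\nu_0$ decomposes locally as a sum of contributions along finitely many rays through $x_0$. Propagating the symmetry condition to neighboring points of $\supp(\nu_0)$ rigidifies the configuration: if only rays through $x_0$ appear, one arrives at case (C); if some ray extends to a full line through another symmetry point, lateral propagation of the Fourier vanishing along translates forces one of the line-based configurations (A), (B), or (D).

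The principal obstacle is the rigidity step at the heart of case (ii). The bound $m \leq M$ in case (C) arises because the simultaneous vanishing of the $k$-th Fourier mode at every vertex of the ray configuration, combined with the angular geometry, imposes algebraic relations involving $k$-th roots of unity that admit only finitely many compatible angular patterns. Excluding exotic periodic arrangements beyond (B) and (D) is similarly delicate: one must show that propagating the Fourier vanishing from one line to another constrains both the angle between consecutive line families and their relative spacings to match the specific three-family $120^\circ$ arrangement in (D) or the two-family interleaved structure in (B). This combinatorial/diophantine analysis, which uses that $k$ is odd to eliminate certain symmetric cancellations, is the most technical portion of the proof and is where I expect to spend the most effort.
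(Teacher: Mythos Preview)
The paper does not prove this theorem. It is quoted verbatim from Huovinen's thesis and attributed there (``due to Huovinen \cite{H}, see Theorem 3.26 in \cite{H}''), so there is no in-paper argument to compare your proposal against. The paper \emph{uses} Theorem~\ref{Huo} as a black box and then, in the subsequent theorem, refines the list (A)--(F) under the additional growth hypothesis $\nu\in\mathcal{M}_s$; that refinement is what the paper actually proves, via local balance arguments around vertex points (the computations leading to \eqref{cjrelation} and \eqref{weights}).

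Your sketch is a reasonable high-level plan for Huovinen's result, and the opening observation---that symmetry at $x$ forces the $k$-th angular Fourier mode of the disintegration of $\nu$ about $x$ to vanish at almost every radius---is indeed the starting point of Huovinen's analysis. But as written it is only a plan: the tangent-measure trichotomy you describe is not established, the passage from ``locally finitely many rays'' to the specific forms (C) and (D) is asserted rather than argued, and you yourself flag the combinatorial rigidity step as the part where the real work lies. If you intend to reconstruct Huovinen's proof, you will need to engage with his detailed local structure lemmas (in particular his Lemma~3.12 on the line-segment structure of the support, which the present paper invokes when analyzing type~(D) measures); the Fourier-mode reformulation alone does not supply them.
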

Here we revisit some of the arguments of \cite{H} to derive the following precise result, from which (recalling Theorem \ref{soft}), Theorem \ref{Huovinen} is an immediate consequence.

\begin{thm}
A measure $\nu \in \mathcal{S}_{s,\Omega}$ if and only if $\nu\in \mathcal{M}_s$ and

$\bullet$   $s=1$, and $\nu$ is of the form
\begin{enumerate}
\item $\nu =  c \mathcal{H}^1 \lfloor {L}$ for some line $L$ and $c\geq 0$.
\item There exists an odd integer $n$ that divides $k$ such that $3 \leq n \leq k$, $x \in \mathbb{C}$, $\alpha \in [0,2\pi)$ and $c\geq 0$ such that
$$\nu = c \sum_{j=0}^{n-1}  \mathcal{H}^1 \lfloor \Lambda_j , \text{ where }\Lambda_j=\{y \in \mathbb{C}: y=x + te^{i(\pi j/n + \alpha)}, t \in \mathbb{R}\}.$$
Moreover, in either case (1) or (2), $S(\Omega,\nu)=\supp(\nu)$.
\end{enumerate}

$\bullet$ $s \in (0,2) \setminus \{1\}$, and $\nu= \text{the zero measure}$.
\end{thm}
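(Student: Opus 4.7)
The plan is to combine Huovinen's classification (Theorem \ref{Huo}) of $\Omega$-symmetric measures with the growth hypothesis $\nu \in \mathcal{M}_s$, proceeding case by case through (A)--(F). For configurations incompatible with the growth bound at small or large scales, the measure must vanish.

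First I would eliminate cases (E), (F), (B), (D) purely from growth. A non-zero discrete measure (case E) contains an atom of mass $m>0$ at some $y_0$, violating $\nu(B(y_0,r)) \leq r^s$ as $r \to 0$. Case (F) is absolutely continuous, producing $\nu(B(x,r)) \gtrsim r^2$ on small balls around interior support points, incompatible with $s<2$. Cases (B) and (D) each exhibit unions of families of parallel lines spanning the plane, so $\nu(B(x,R)) \gtrsim R^2$ for large $R$, again impossible. The surviving cases (A) and (C) are one-dimensional: they give $\nu(B(y_0,r))$ comparable to $r$ along the support. The $s$-growth condition thus forces $s \leq 1$ from small $r$ and $s \geq 1$ from large $r$, so these survive only when $s=1$. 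This handles the $s \in (0,2)\setminus\{1\}$ part of the statement.

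Now fix $s=1$. Case (A) immediately gives configuration (1), with $c \leq \tfrac{1}{2}$ forced by growth. The substantive work is refining Huovinen's case (C), which yields $m$ rays $\Lambda'_j$ from an apex $x$ at angles $\theta_j = 2\pi j/m + \alpha$ with positive weights $c_j$, into configuration (2). Using $\Omega(-te^{i\theta}) = -t e^{ik\theta}$ for $t>0$ (since $k$ is odd), symmetry at the apex reduces to $\sum_j c_j e^{ik\theta_j} = 0$. The main obstacle is promoting this single linear constraint to the four properties required for configuration (2): (i) $m = 2n$ with opposite rays paired, (ii) opposite rays have equal weight (so $\supp(\nu)$ is $n$ full lines through $x$), (iii) all weights coincide, and (iv) $n \mid k$. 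I would obtain (i)--(iii) by imposing the symmetry identity at distant points $y_0 = x + t_0 e^{i\theta_{j_0}}$ on each ray and analyzing balls $B(y_0,r)$ just large enough to engulf the apex together with truncated portions of the remaining rays; varying $t_0$ and the distinguished ray index $j_0$ produces a linear system in $(c_j)$ whose only solutions (exploiting $(-1)^k = -1$) force opposite-ray equality and then total equality. Property (iv) comes from the reduced apex condition $\sum_{j=0}^{2n-1} e^{i\pi k j/n} = 0$, a geometric series that vanishes precisely when $e^{i\pi k/n} \neq 1$; since $k$ is odd this is exactly $n \mid k$ with $n$ odd.

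Finally, to verify $S(\Omega, \nu) = \supp(\nu)$ in both cases (1) and (2): case (1) is immediate from Corollary \ref{nondegplane}. For case (2), suppose for contradiction that some $y \in S(\Omega, \nu) \setminus \supp(\nu)$, and set $d = \dist(y, \supp(\nu))$. Lemma \ref{nosup} forces $\partial B(y,d)$ to meet $\supp(\nu)$ in at least two points. A geometric analysis of $n$ concurrent lines through $x_0$ shows that the nearest-point set to an off-support $y$ is a small discrete subset of the support; integrating over $B(y, d+\varepsilon)$ for small $\varepsilon$ and using that $\Omega$ does not vanish on any of the contributing rays produces a nonzero residual in the symmetry integral, contradicting $y \in S(\Omega, \nu)$.
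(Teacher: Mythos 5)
Your high-level strategy (Huovinen's classification $\cup$ growth) matches the paper's, and the eliminations of (E), (F), (B) by growth and of cases (A)/(C) outside $s=1$ by growth at both scales are all fine. But the substance of the proof lies in cases (C), (D), and in pinning down the symmetric points, and there you have several genuine gaps.

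First, case (D) cannot be discarded ``purely from growth.'' In Huovinen's Theorem \ref{Huo} statement (D), only $\supp(\nu)$ is specified --- nothing prevents a measure supported on that tiling whose density per segment decays at infinity and does satisfy $\nu(B(0,R))\lesssim R^s$. The paper first establishes (using the same refinement argument as for type (C), citing Lemma 3.12 of \cite{H}) that the segment weights are all equal, and only then concludes $R^2$ growth.

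Second, your refinement of case (C) is not a proof. The apex condition $\sum_j c_j e^{ik\theta_j}=0$ is just one complex equation and cannot determine the $c_j$, as you recognize, but the ``distant point'' argument you propose to fill the gap is both vague and geometrically misleading. If $y_0 = x + t_0 e^{i\theta_{j_0}}$ and $r$ is only slightly larger than $t_0$, then for every ray $\Lambda'_j$ at angle $\gamma$ to $\Lambda'_{j_0}$ with $\cos\gamma > 0$, the ball $B(y_0,r)$ already contains a full chord of that ray of length $\approx 2t_0\cos\gamma$, not a small ``truncated portion'' near the apex; the integrals are order one, not perturbative. The paper works with a very different ball: centered at a point $z_0$ on a ray, of radius equal to the distance to the two \emph{neighboring} rays, so that the ball is tangent to them. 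By oddness, the contribution of $z_0$'s own ray vanishes, and sending the radius to $d^+$ isolates the two tangent points, giving the sharp two-term relation $c_{j-1}(\cos\theta e^{i\theta}-1)^k + c_{j+1}(\cos\theta e^{-i\theta}-1)^k = 0$. Solving this gives both $c_{j-1}=c_{j+1}$ and the angle constraint $\theta = \pi p/k$ with $p$ odd, which is where the rigidity comes from. Your ``linear system whose only solutions force opposite-ray equality and then total equality'' is asserted, not derived, and I see no way to make the engulfing-ball version yield it cleanly.

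Third, your derivation of (iv), that $n\mid k$, is simply wrong. With $m=2n$ and equal weights, the apex identity reduces to $\sum_{j=0}^{2n-1} e^{i\pi k j/n}=0$, which holds iff $e^{i\pi k/n}\neq 1$, i.e.\ iff $k/n$ is not an even integer. That holds for \emph{every} odd $n$, whether or not $n$ divides $k$: if $n\nmid k$ then $k/n$ is not even an integer, and if $n\mid k$ then $k/n$ is odd. So the apex condition is vacuous for this purpose. The divisibility $n\mid k$ must come from the angular constraint $\theta = 2\pi/m = \pi p/k$ with $p$ odd (hence $n=m/2=k/p$ and $p=k/n\in\mathbb{Z}$), which is exactly what the paper's local tangent-ball calculation delivers and which you do not obtain.

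Fourth, the claim that for a type (2) measure, ``using that $\Omega$ does not vanish on any of the contributing rays produces a nonzero residual'' is not valid: $\Omega(v_1-y)$ and $\Omega(v_2-y)$ are individually nonzero, but a sum of two nonzero complex numbers can certainly be zero. The paper's argument shows (again by the same two-term relation) that if $y$ on a bisector were a symmetric point, the bisector angle $\gamma = \pi/(2n)$ would have to equal $\pi q/k$ for an odd $q$, forcing $k=2nq$; since $k$ is odd and $2nq$ is even, this is impossible. That arithmetic step, not the mere non-vanishing of $\Omega$, is what rules out off-support symmetric points, and your sketch omits it.
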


\begin{proof}  We consider a non-zero symmetric measure $\nu$ taking each of the forms $(A)$--$(F)$ from Theorem \ref{Huo} in turn.

It is clear that any measure of the form (A) is symmetric, and from Corollary \ref{nondegplane} we infer that $S(\Omega,\nu)=\supp(\nu)$ for any such planar measure $\nu$.

Any non-zero measure $\nu$ of the form (B) cannot belong to $\mathcal{M}_s$ for any $s \in (0,2)$, since $\nu(B(0,R)$ is of the order $R^2$ for large $R$.

Now assume that $\nu$ is of the form (C).  Without loss of generality, we may assume that $x=\alpha=0$ in (C), so there exists $M$ depending only on $\Omega$ and $5 \leq m \leq M$, $x \in \mathbb{C}$, $\alpha \in [0,2\pi)$ and positive numbers $c_0,...,c_{m-1}$ such that
$$\nu =  \sum_{j=0}^{m-1}c_j  \mathcal{H}^1 \lfloor \Lambda'_j ,\text{ where }\Lambda'_j=\{y \in \mathbb{C}: y=te^{2\pi ij/m}, t \in \mathbb{R}^+\}.$$
We shall prove that $m$ is even. Let $z_0 \in \supp(\nu)\cap \Lambda'_j$. After a rotation and dilation, we may assume that $z_0=1$.
 Let $\theta$ be the angle formed by $\Lambda'_j$ and $\Lambda'_{j+1}$ and $c_{j-1}$ and $c_{j+1}$ the weights associated to $\Lambda'_{j-1}$,  and $\Lambda'_{j+1}$, respectively.

 \begin{figure}
\includegraphics[height=10cm]{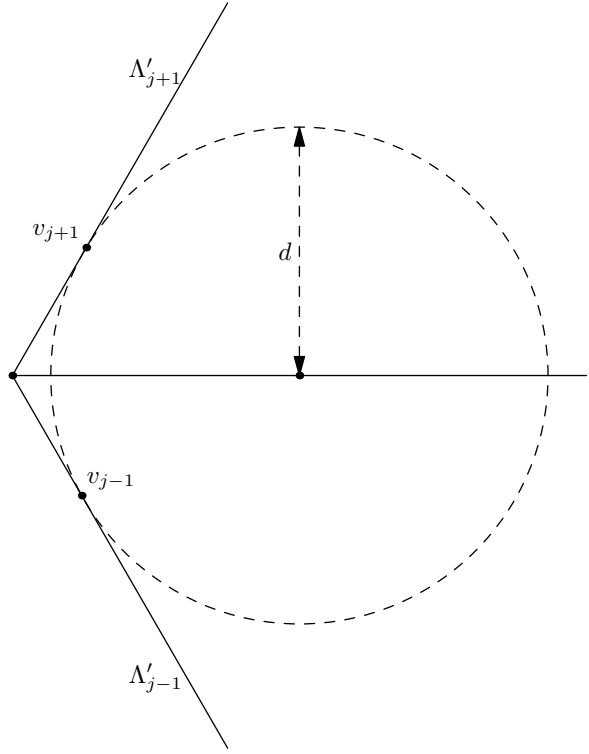}
\caption{The set-up to further analyze the form (C) measures}
\label{fig:typeC1}
\end{figure}

 We denote by $d=\dist(1,\Lambda'_{j-1})=\dist(1,\Lambda'_{j+1})$ and $v_s=\overline{B(1,d)} \cap \Lambda'_s$ for $s=j-1,j+1$.  (See Figure \ref{fig:typeC1}.)  Note that $d<1$ as $m\geq 5$. Note also that $v_{j-1}$ and $v_{j+1}$ are of the form: $$v_{j-1}=\cos \theta e^{i\theta}, v_{j+1}=\cos \theta e^{-i\theta}.$$ Provided that $d+\delta<1$, the oddness of $\Omega$ ensures that the integral
 $$\int_{B(1,d+\delta)\cap \Lambda_j'}\Omega(y-1)d\nu(y)=c_j\int_{(1-(d+\delta), 1+(d+\delta))}\Omega(y-1)d\mathcal{H}^1(y)=0,
 $$
 and so since $\Omega$ is continuous,
 we have that
 \begin{equation} \begin{split} \nonumber
0&= \lim_{\delta \to 0} \frac{1}{\nu(B(1,d+\delta)\cap (\Lambda'_{j+1}\cup \Lambda'_{j-1}))}\int_{B(1,d+\delta))\cap (\Lambda'_{j+1}\cup \Lambda'_{j-1})} \Omega(y-1) \,d\nu(y)\\
& = \lim_{\delta \to 0} \frac{c_{j-1}}{(c_{j-1}+c_{j+1})\mathcal{H}^1(B(1,d+\delta)\cap \Lambda'_{j+1})}\int_{B(1,d+\delta))\cap \Lambda'_{j-1}} \Omega(y-1) \,d\mathcal{H}^1(y) \\
&\phantom{as} + \lim_{\delta \to 0} \frac{c_{j+1}}{(c_{j-1}+c_{j+1})\mathcal{H}^1(B(1,d+\delta)\cap \Lambda'_{j+1}) }\int_{B(1,d+\delta))\cap \Lambda'_{j+1}} \Omega(y-1) \,d\mathcal{H}^1(y)\\
& = \frac{1}{c_{j-1}+c_{j+1}}(c_{j-1} \,\Omega(v_{j-1}-1) + c_{j+1} \,\Omega(v_{j+1}-1)).
\end{split}\end{equation}

Hence we have achieved that
\begin{equation}\label{cjrelation} c_{j-1} (\cos \theta e^{i\theta} -1)^k + c_{j+1}(\cos \theta e^{-i\theta} -1)^k=0.\end{equation}
Solving this, we find that $c_{j-1}=c_{j+1}$ and that
$\frac{\cos \theta e^{i\theta} -1}{|\cos \theta e^{i\theta} -1|}=e^{i\pi l/2k}$, with $l$ odd.
Once more, solving for $\theta$, we obtain that $\theta=\frac{\pi p}{k},$ with $p$ integer.
But $p$ needs to be odd, since otherwise, $e^{i\theta k j}  = 1$ for every $j$ and it would follow that, for any $r>0$,
$$\int_{B(0,r)} \Omega(y) \,d\nu(y)=m c_{j-1} r \neq 0,$$  and $0\in \supp(\nu)$ would not be a symmetric point.
Moreover, we also know that  $m=2\pi/ \left(\frac{\pi p}{k} \right)= \frac{ 2 k}{p} \in \mathbb{Z}$  and, since $p$ and $k$ are both odd, we infer that  $m$ is even. Set $n=m/2$, then $n=k/p$, so $n$ is an odd integer that divides $k$.  We have that $n\geq 3$ since $m=2n\geq 5$.  Now, the fact that $c_{j-1}=c_{j+1}$ for every $j$ ensures that $$\nu = a\sum_{j \text{ even}}  \mathcal{H}^1 \lfloor \Lambda'_j +b\sum_{j \text{ odd}}  \mathcal{H}^1 \lfloor \Lambda'_j ,  $$
for some positive $a$ and $b$.
Since $0\in S(\Omega, \nu)$, we notice that
\begin{equation}\label{weights}
\int_{B(0,r)} \Omega(y) \,d\nu(y)= nr(a-b)=0,
\end{equation}
and so $a=b$.  Therefore $\nu$ is of the form (2).

It remains to prove that $S(\Omega, \nu)= \supp(\nu)$ for a measure $\nu$ of the form (2).  For that purpose we introduce  $$\wt\Lambda=\bigcup_{j=0}^{n-1} \wt\Lambda_j=\bigcup_{j=0}^{n-1} \{ y \in \mathbb{C}: y= te^{i\pi (2j+1)/2n}, t \in \mathbb{R}\},$$ i.e. the union of the bisectors of the support of the measure.

Using Lemma \ref{nosup}, we deduce that the possible symmetric points that lie outside $\supp(\nu)$ must belong to $\wt\Lambda$. Consequently, we readily have that $\supp(\nu) \subset S(\Omega,\nu) \subset \supp(\nu) \cup \wt\Lambda$. Now we prove that indeed, $S(\Omega,\nu)= \supp(\nu)$.
Assume $1 \in S(\Omega,\nu)\cap \wt\Lambda_j$ for some $j$.  Let $\gamma=\frac{  \pi}{2n}$ be the angle formed by $\wt\Lambda_j$ and $\Lambda_j$.

 Mimicking our previous reasoning, we obtain that
$$\frac{1}{2} \Bigl( \Omega(\omega -1) + \Omega(\overline{\omega}-1)\Bigr)=0,$$
where $\omega= \cos \gamma e^{-i\gamma}$.
Consequently, as before, $\gamma=\frac{\pi q}{k},$ with $q$ an odd integer. But then $n=2q/k$ is even, which it isn't. So  $\supp(\nu) = S(\Omega,\nu)$.

On the other hand, arguing as in Example 3.27 of \cite{H}, one readily can see that any measure $\nu$ of the form (2) is symmetric.

Next, suppose $\nu$ is a non-zero symmetric measure of the form (D).  We wish to conclude that necessarily $\nu$ is not in $\mathcal{M}_s$ for $s\in (0,2)$.  Our analysis will repeat the ideas used for type (C) measures.   Notice that the support of $\nu$ is the boundary of a tiling of the plane $\mathbb{C}$ with equilateral triangles, and consists of vertex points with six segments emanating from each vertex point.  See Fig \ref{fig:triangles}

\begin{figure}[h!]
    \includegraphics[width=\linewidth]{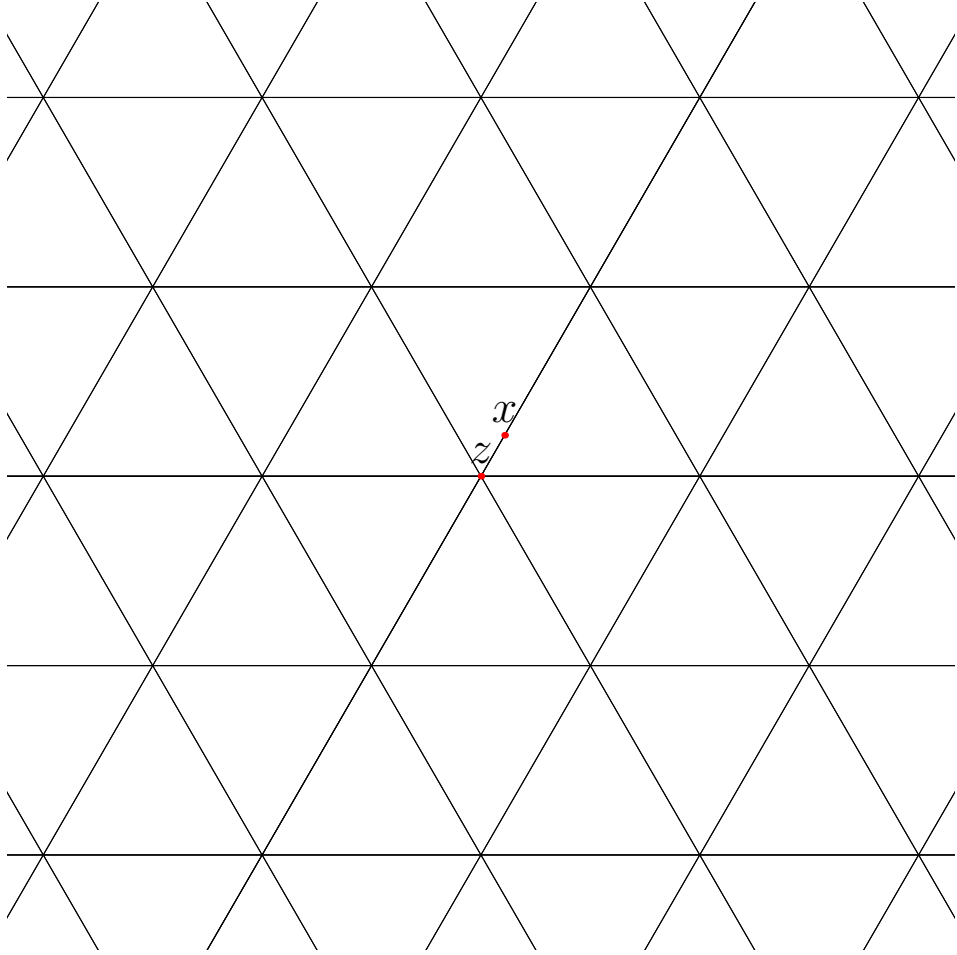}
     \caption{The support of a type (D) measure.}
  \label{fig:triangles}
\end{figure}

From Lemma 3.12 in \cite{H} we infer that $\nu|_{\Lambda} = c_{\Lambda}\mathcal{H}_{\Lambda}$ on each such line segment $\Lambda$, with $c_{\Lambda}>0$.  Now consider a vertex point $z$ and label the six segments through $z$ as $\Lambda_1, \dots, \Lambda_6$.   Fix a segment $\Lambda_i$ and consider the symmetry property at a point $x$ on a segment $\Lambda_j$, $j\in \{1,\dots, 6\}$, with $x$ close to $z$ (see Figure \ref{fig:triangles}).  Repeating the argument leading to (\ref{cjrelation}) (with $x$ replacing $1$, $z$ replacing $0$, and $d=\text{dist}(x, \Lambda_{j-1}\cup\Lambda_{j+1})$, see Figure \ref{fig:typeC1}), we obtain that $c_{\Lambda_{j+1}}=c_{\Lambda_{j-1}}$.  Consequently, there are only two possibilities for the weights $c_{\Lambda_j}$, and for small $r>0$, we have
$$\nu|_{B(z,r)} = \sum_{\substack{j\text{ odd},\\1\leq j\leq 6}}c_1\mathcal{H}^1_{\Lambda_j\cap B(z,r)} +\sum_{\substack{j\text{ even},\\ 1\leq j\leq 6}}c_2\mathcal{H}^1_{\Lambda_j\cap B(z,r)}
$$
for some $c_1,c_2>0$.  However, we may then consider the symmetric property at the vertex point $z$ at some small radius $r>0$.  Then we repeat the calculation in  (\ref{weights}) (with $0$ replaced by $z$) to get that $c_1=c_2$, so $c_{\Lambda_1}=c_{\Lambda_2}=\cdots=c_{\Lambda_6}$. Whence, by a connectivity argument, we find $a>0$ such that $c_{\Lambda}=a$ for any segment $\Lambda$ in the tiling, and $\nu= a \mathcal{H}^1_{| \Theta},$
where$$\Theta= \Bigl\{\bigcup_{j=0}^2 \bigcup_{l=-\infty}^\infty \{y \in \mathbb{C}: y=b\cdot l\cdot e^{i(\alpha + 2j\pi/3 + \pi/2)} + te^{i(\alpha+2j\pi/3)}, t \in \mathbb{R}\} \Bigl\}.$$
Consequently, $\nu(B(0,R))$ is of the order $R^2$ for large $R$ and therefore cannot lie in $\mathcal{M}_s$ for any $s \in (0,2)$.

Certainly discrete measures (type (E)), and measures absolutely continuous with respect to $m_d$ whose density is a non-zero polynomial (type (F)), cannot lie in $\mathcal{M}_s$ for $s\in (0,2)$.

We conclude that if $s=1$ then $\mathcal{S}_{s,\Omega}$ consists of type (A) and type (C) measures, and the set of symmetric points of such a measure is equal to the support, and if $s\in (0,2)\backslash 1$, then $\mathcal{S}_{s,\Omega}$ consists of the zero measure.
\end{proof}

\subsection{Operators of co-dimension less than one}\label{codimsec}

 Given an odd one degree homogeneous kernel $\Omega \in L^1(\mathbb{S}^{n-1})$, we consider the principal value distribution $K(x)=\frac{\Omega(x)}{|x|^{d+1}}$, which acts on a Schwartz class function $\phi \in \mathcal{S}(\mathbb{R}^d)$, by
  \begin{equation*} \begin{split}
  P.V. \int_{\mathbb{R}^d} K(x) \phi(x) \,dx & = \lim_{\varepsilon \to 0}  \int_{B(0,1)\setminus B(0,\varepsilon)} K(x) [\phi(x)-\phi(0)] \,dx\\   & \phantom{asd} +  \int_{B(0,1)^c} K(x) \phi(x) \,dx.
  \end{split}\end{equation*}

  The following classical result may be found in Stein-Weiss \cite{SW}, Theorem 4.7.

  \begin{thm} \label{SW}
 The Fourier transform $m = \widehat{K}$ of the principal value distribution is a function that is homogeneous of degree zero, i.e., $m(x)=m(x/|x|)$ for $x \neq 0$. Moreover
 $$m(x)= - \int_{\mathbb{S}^{n-1}} \Omega(\omega) \Bigl[\frac{i\pi}{2} sgn\Bigl(\frac{x}{|x|} \cdot \omega\Bigl) + \log \Bigl|\frac{x}{|x|} \cdot \omega\Bigl| \Bigl] \,d\mathcal{H}^{d-1}(\omega),$$
 for $x \neq 0$, where $sgn$ denotes the signum function.
  \end{thm}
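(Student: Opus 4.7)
The plan is to compute $\widehat{K}$ by approximating $K$ with integrable truncations, evaluating their Fourier transforms in polar coordinates, and passing to the limit via a classical one-dimensional oscillatory integral identity.

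For the homogeneity claim, note that $K(\lambda\,\cdot)=\lambda^{-d}K(\cdot)$ as tempered distributions for every $\lambda>0$: this is immediate from the principal value definition, because the subtraction of $\phi(0)$ inside $B(0,1)$ does not spoil the scaling (the oddness of $\Omega$ forces the PV of the constant term to vanish on spherical shells). Hence $\widehat{K}(\lambda\xi)=\widehat{K}(\xi)$, which gives $m(\xi)=m(\xi/|\xi|)$.

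For the explicit formula, set
\[
K_{\varepsilon,R}(x)=\frac{\Omega(x/|x|)}{|x|^{d}}\mathbf{1}_{\varepsilon<|x|<R}\;\in L^1(\mathbb{R}^d),
\]
and observe that $K_{\varepsilon,R}\to K$ in $\mathcal{S}'(\mathbb{R}^d)$ as $\varepsilon\to 0^+$ and $R\to\infty$ (again via the PV definition and oddness of $\Omega$). Passing to polar coordinates $x=r\omega$ and swapping the order of integration,
\[
\widehat{K_{\varepsilon,R}}(\xi)=\int_{\mathbb{S}^{d-1}}\Omega(\omega)\int_\varepsilon^R\frac{e^{-ir\,\omega\cdot\xi}}{r}\,dr\,d\mathcal{H}^{d-1}(\omega),
\]
and the substitution $\omega\mapsto-\omega$ combined with $\Omega(-\omega)=-\Omega(\omega)$ cancels the cosine contribution, leaving
\[
\widehat{K_{\varepsilon,R}}(\xi)=-i\int_{\mathbb{S}^{d-1}}\Omega(\omega)\int_\varepsilon^R\frac{\sin(r\,\omega\cdot\xi)}{r}\,dr\,d\mathcal{H}^{d-1}(\omega).
\]
By the classical Dirichlet integral $\int_0^{\infty}\sin(at)/t\,dt=(\pi/2)\operatorname{sgn}(a)$, together with the uniform $L^\infty$ bound on the partial integrals $\int_\varepsilon^R\sin(at)/t\,dt$, dominated convergence on $\mathbb{S}^{d-1}$ (with majorant $C|\Omega(\omega)|\in L^1(\mathbb{S}^{d-1})$) yields
\[
m(\xi)=-\frac{i\pi}{2}\int_{\mathbb{S}^{d-1}}\Omega(\omega)\operatorname{sgn}\bigl(\omega\cdot\xi/|\xi|\bigr)\,d\mathcal{H}^{d-1}(\omega).
\]
Finally, since $\Omega$ is odd while $\omega\mapsto \log|\omega\cdot\xi/|\xi||$ is even, their product integrates to zero on the sphere, so adding this null term produces the Stein--Weiss formula exactly as stated.

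The main obstacle is establishing the uniform boundedness of $\int_\varepsilon^R\sin(at)/t\,dt$ in all three parameters $\varepsilon,R,a$, which is needed to justify dominated convergence on the sphere; this is a standard estimate (for instance, by splitting at $t\sim 1/|a|$ and integrating by parts on the tail). Once that is in hand, the polar representation, the symmetrization via oddness of $\Omega$, and the passage to the distributional limit are all routine.
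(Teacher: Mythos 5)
Your proof is correct, but you should know at the outset that the paper itself does not prove Theorem \ref{SW}; it simply cites it as Theorem 4.7 of Stein--Weiss. So what you have supplied is a reconstruction of the classical argument rather than something that can be compared against an in-paper proof.

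Your reconstruction is essentially the standard one, with one pleasant simplification: by invoking the oddness of $\Omega$ at the polar-coordinates step, the $\omega \mapsto -\omega$ symmetry kills the cosine contribution outright, so you only ever confront the Dirichlet integral $\int_0^\infty \sin(at)/t\, dt = \tfrac{\pi}{2}\operatorname{sgn}(a)$ and never the logarithmically divergent cosine integral. In the version of Stein--Weiss that treats general mean-zero $\Omega$, the cosine piece does not disappear; it is instead tamed by the cancellation coming from $\int_{\mathbb{S}^{d-1}}\Omega = 0$, and produces the $\log|\omega\cdot\xi/|\xi||$ term. You correctly observe that for odd $\Omega$ the log term integrates to zero, so adding it back gives the stated formula; your route is thus a genuinely shorter path to the same answer in the odd case. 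Two small points that deserve a word in a full write-up: (i) for the pointwise dominated convergence on $\mathbb{S}^{d-1}$ you should note that the exceptional set $\{\omega : \omega\cdot\xi = 0\}$ has $\mathcal{H}^{d-1}$-measure zero, so the pointwise limit $\tfrac{\pi}{2}\operatorname{sgn}(\omega\cdot\xi)$ is defined $\mathcal{H}^{d-1}$-a.e.; and (ii) the passage from pointwise convergence of $\widehat{K_{\varepsilon,R}}$ to identifying $\widehat{K}$ uses that $K_{\varepsilon,R}\to K$ in $\mathcal{S}'$ (hence $\widehat{K_{\varepsilon,R}}\to\widehat{K}$ in $\mathcal{S}'$) together with the uniform $L^\infty$ bound on $\widehat{K_{\varepsilon,R}}$, which lets dominated convergence show the $\mathcal{S}'$-limit coincides with the pointwise one. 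You flag both of these as routine, and they are, but they are the places where the argument could silently go wrong if $\Omega$ were not odd or not in $L^1(\mathbb{S}^{d-1})$.
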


In our situation, we will be assuming $\Omega$ is a smooth function, in which case $m$ is smooth too, see e.g. Proposition 2.4.8 of \cite{G}.

In view of Theorem \ref{soft}, Theorem \ref{gennonint} follows from the following result.

\begin{thm} \label{sym}
  Suppose that $\Omega$ is an odd, one homogeneous kernel satisfying
  \begin{enumerate}
  \item  there exists $k\in \mathbb{N}$ such that $x\to \Omega(x)|x|^k$ is real analytic in $\R^d$,
  \item  $m(\xi)\neq 0$ for all $\xi \in \mathbb{S}^{d-1}$, where $m$ is the Fourier transform of the principal value distribution associated to $\Omega$.
  \end{enumerate}
  Fix $s \in (d-1,d) $.  If $\mu \in \Sym$, then $\mu \equiv 0$.
\end{thm}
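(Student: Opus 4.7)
The strategy is to convert the $\Omega$-symmetry of $\mu$ into a Fourier-side quadratic identity, and then exploit the non-vanishing of $m$ together with the real-analyticity of $\Omega(x)|x|^k$ to force $\hat\mu$ to be supported at the origin, from which $\mu\equiv 0$ follows.

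Starting from the ball symmetry $\int_{B(x,r)}\Omega(x-y)\,d\mu(y)=0$ valid for $x\in\supp(\mu)$ and all $r>0$, I would integrate against the weight $r^{-d-2}\,dr$ over $(\epsilon,R)$ and apply Fubini --- legitimate because $\mu\in\mathcal{M}_s$ with $s<d$ gives the decay estimate $\int_{|x-y|>\epsilon}|x-y|^{-d}\,d\mu(y)\lesssim\epsilon^{s-d}$. This yields, for every $x\in\supp(\mu)$ and $0<\epsilon<R$,
\[
\int_{\epsilon<|x-y|<R}\frac{\Omega(x-y)}{|x-y|^{d+1}}\,d\mu(y)=0,
\]
and letting $R\to\infty$ then $\epsilon\to 0$ shows that the principal value $T\mu(x):=\mathrm{p.v.}\int K(x-y)\,d\mu(y)$, with $K=\Omega/|\cdot|^{d+1}$, vanishes on $\supp(\mu)$. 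Hence $(T\mu)\cdot\mu=0$ as a signed Radon measure, and taking its Fourier transform, together with $\widehat{K}=m$ from Theorem \ref{SW}, produces the distributional identity
\[
(m\hat\mu)*\hat\mu=0.
\]

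I would next bring in the two structural hypotheses. The real-analyticity of $\Omega(x)|x|^k$ combined with its $(k+1)$-homogeneity forces $\Omega(x)|x|^k=P(x)$ for a homogeneous polynomial $P$ of degree $k+1$; decomposing $P$ into solid spherical harmonics and using Hecke--Bochner for the PV distribution shows that $m|_{\mathbb{S}^{d-1}}$ is a \emph{finite} linear combination of spherical harmonics of degrees $k+1,k-1,\dots$, nowhere vanishing by hypothesis. Assuming for contradiction that $\mu\ne 0$, I would apply Lemma \ref{weakcompact} at a point of positive upper density to produce a non-zero tangent measure $\nu\in\mathcal{S}_{s,\Omega}$, and (by multiplying by a smooth bump whose commutator with $T$ is controlled by the $\mathcal{M}_s$-growth) reduce to the case that the object of study has compact support, so that its Fourier transform is entire of exponential type.

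The \textbf{main obstacle} is now to extract $\supp\hat\nu\subset\{0\}$ from the quadratic identity $(m\hat\nu)*\hat\nu=0$, given the finite, nowhere-vanishing spherical-harmonic structure of $m$. My plan is to test the identity against Schwartz functions in $\xi$, differentiate at $\xi=0$, and use the density of polynomials in $L^2(\nu)$ (by Weierstrass, since $\nu$ is compactly supported) to convert the resulting orthogonality relations into the statement that $m\hat\nu\equiv 0$ as a distribution on $\mathbb{R}^d\setminus\{0\}$; the non-vanishing of $m$ there then gives $\hat\nu\equiv 0$ off the origin. Remark \ref{remark} --- where a zero of $m$ on $\mathbb{S}^{d-1}$ permits non-trivial symmetric measures supported on subspaces --- confirms that both hypotheses are genuinely needed at this step. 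Finally, $\supp\hat\nu\subset\{0\}$ forces $\nu$ to be a finite linear combination of derivatives of $\delta_0$, i.e.\ a polynomial function; but a non-negative locally finite Borel measure with $\nu(B(x,r))\leq r^s$ and $s<d$ that is also polynomial must be identically zero, contradicting the non-triviality of $\nu$. Hence $\mu\equiv 0$.
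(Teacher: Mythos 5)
Your starting point is sound: Remark \ref{intsym} (which you re-derive by integrating the ball symmetry over $r$) gives that the truncated integrals $\int_{\varepsilon<|x-y|<R}\Omega(x-y)|x-y|^{-d-1}\,d\mu(y)$ vanish for every $x\in\supp(\mu)$, and the $\mathcal{M}_s$-growth with $s<d$ does give the decay bound you quote. The trouble begins at the crucial step, which is exactly the one you flag as the ``main obstacle.'' You want to pass from ``$T\mu$ vanishes on $\supp(\mu)$'' to a statement about $\widehat\mu$ on all of $\mathbb{R}^d$, via the quadratic identity $(m\widehat\mu)*\widehat\mu=0$. But this identity is precisely equivalent to $\int (T\mu)\varphi\,d\mu=0$ for test $\varphi$, which is the hypothesis you started with; it encodes only that $T\mu=0$ $\mu$-a.e., and says nothing about $T\mu$ off $\supp(\mu)$. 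Testing against polynomials dense in $L^2(\mu)$ therefore returns you to where you started and cannot deliver $m\widehat\mu\equiv 0$ on $\mathbb{R}^d\setminus\{0\}$. There is no route from the quadratic identity alone to $\supp\widehat\mu\subset\{0\}$; indeed one genuinely needs information about $T\mu$ \emph{off} the support, and your plan does not supply it. (The reduction to compact support via tangent measures is also shaky: tangent measures of symmetric measures need not be compactly supported --- $\mathcal{H}^1_{|L}$ is its own tangent --- and the commutator control you invoke is not worked out.)

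The paper's proof supplies exactly the missing ingredient, and it uses the real-analyticity hypothesis for a different purpose than you do. Set $f_{\mu,t}(x)=\int e^{-t^2|x-y|^2}|x-y|^k\Omega(x-y)\,d\mu(y)$, a real-analytic function (because $\Omega(x)|x|^k$ is). Symmetry gives $\supp(\mu)\subset\{f_{\mu,t}=0\}$, and the paper splits into two cases according to whether $f_{\mu,t}\equiv 0$ \emph{on all of $\mathbb{R}^d$} for every $t$ or not. If some $f_{\mu,t_0}\not\equiv 0$, real-analyticity plus the implicit function theorem put $\{f_{\mu,t_0}=0\}$ inside a countable union of $(d-1)$-surfaces, which cannot carry an $\mathcal{M}_s$-measure with $s>d-1$ (Lemma \ref{nocharge}). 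If $f_{\mu,t}\equiv 0$ everywhere for all $t$, then one has a \emph{linear} convolution identity valid on all of $\mathbb{R}^d$, and after mollifying $\mu$ to a density $g$ and integrating in $t$ one obtains $G*g\equiv 0$ for suitable Schwartz $G$ whose Fourier transform $m_j\widehat{\Delta\psi}$ is non-zero near an arbitrary $\xi_0\neq 0$; inverting locally (using $m(\xi_0)\neq 0$) shows $\supp(\widehat g)\subset\{0\}$, hence $g$ is a polynomial, which is incompatible with the growth $\mu_c(B(0,R))\lesssim R^s$, $s<d$, unless $g\equiv 0$. This dichotomy is what lets the argument replace your ill-posed quadratic equation by a global linear one, and it is the step you would need to discover to close the gap.
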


   \begin{proof}[Proof of  Theorem \ref{sym}]     For $t>0$ consider the function
    $$f_{\mu,t}(x) = \int_{\R^d}e^{-t^2|x-y|^2}|x-y|^k\Omega(x-y)\, d\mu(y), \;x\in \R^d.$$
    Since $\mu$ is $\Omega$-symmetric, we may use Remark \ref{intsym} to find that $\supp(\mu)\subset \{f_{\mu,t}=0\}$ for any $t>0$.  Consider the following alternative:
     \begin{enumerate}
       \item $f_{\mu,t} \equiv 0$ in $\mathbb{R}^d$ for every $t >0$, or
       \item there exists some $t_0>0$ such that $f_{\mu,t_0} \not\equiv 0$.
       \end{enumerate}
       Suppose first that  $f_{\mu,t_0} \not\equiv 0$ for some $t_0$.  Then since $f_{\mu,t_0}$ is real analytic on $\R^d$, we have that for every $x\in \R^d$, there is some multi-index $\alpha = (\alpha_1,\alpha_2,\dots, \alpha_d)$, with $\alpha_j\in \mathbb{Z}_+$ such that $D^{\alpha}f_{\mu, t_0}(x)\neq 0$.  For multi-indices $\alpha, \beta$ we write $\alpha<\beta$ to mean $\alpha_j\leq \beta_j$ for every $j\in \{1,\dots, d\}$ but there is some $j\in \{1,\dots, d\}$ with $\alpha_j<\beta_j$.  We write
\begin{align*}
&\supp(\mu) \subset f^{-1}_{\mu,t_0}(0) \cap \bigcup_{\alpha \text{ multi-index}}\{x \in \mathbb{R}^d: D^{\alpha}f_{\mu,t_0}(x) \neq 0 \} \\
& = \bigcup_{\alpha \text{ multi-index}}\{x \in \mathbb{R}^d: D^{\alpha}f_{\mu,t_0}(x) \neq 0 , D^{\beta}f_{\mu,t_0}(x) = 0 \hspace{0.2cm} \text{ for every } \beta <
\alpha\}.
\end{align*}
The implicit function theorem ensures that each set in the union on the right
hand side is locally contained in a smooth $(d-1)$-surface. This contradicts
 the growth of $\mu\in \mathcal{M}_s$ (cf. Lemma \ref{nocharge}).  We can therefore assume that $f_{\mu, t}\equiv 0$ for every $t>0$.\\

 We shall first regularize the measure $\mu$.  Set $g(x) = \int_{\R^d}\varphi(x-y)d\mu(y)$, where $\varphi$ is a smooth non-negative compactly supported function (such as the function $\varphi$ introduced in the definition).  Now define, for a Borel set $A$,
       $$ \mu_{c} (A)= \int_A \int_{\mathbb{R}^d} \varphi(|x-y|) \,d\mu(y) \,dm_d( x) = \int_Ag(x)dm_d(x).$$
       There exists $C>0$ (depending on $\varphi$) such that for every $x\in \R^d$, \begin{equation}\label{smoothgrowthbds}\mu_c(B(x,r)) = \int_{B(x,r)}gdm_d \leq C\min(r^s,r^d) \text{ for any }r>0.\end{equation} Consequently, for $\alpha\in (s+1, d+1)$,  
        \begin{equation}\label{l1mc}\sup_{x\in \R^d}\int_{\R^d}\frac{1}{|x-y|^{\alpha}}d\mu_c(y) = \sup_{x\in \R^d}\int_{\R^d}\frac{1}{|x-y|^{\alpha}}g(y)dy\leq C_{\alpha}. \end{equation}
       
        Besides, $\mu_{c}$ satisfies that $f_{\mu_{c},t} \equiv 0$ on $\mathbb{R}^d$ for every $t>0$.
       Hence, if $\alpha \in (s+1,d+1)$, then
              \begin{align*}
       \int_{0}^{\infty} &t^{\alpha} \int_{\mathbb{R}^d} |x-y|^{k}\Omega(x-y)e^{-t^2|x-y|^2} \,d\mu_c(y) \frac{\,dt}{t}
       \\
       &= \int_{\mathbb{R}^d}  \frac{\Omega(x-y)}{|x-y|^{\alpha}} \int_0^{\infty}
       t^{\alpha + k} e^{-t^2} \frac{\,dt}{t} \,d\mu_c \equiv 0 \text{   in }\mathbb{R}^d.
       \end{align*}
       Since $\| \nabla g \|_{L^{\infty}}\lesssim 1$, we have
              $$  \int_{|x-y|\leq 1} \left| \frac{\Omega(x-y)}{|x-y|^{d+1}}[g(y)-g(x)]\right|
       \,dy \lesssim \int_{|y|\leq 1} \frac{1}{|y|^{d-1}} \,dy \lesssim 1.$$
       Consequently,
       \begin{equation}\begin{split} \label{star}
    & \int_{|x-y| \leq 1} \frac{\Omega(x-y)}{|x-y|^{d+1}} [g(y)-g(x)] \,dy + \int_{|x-y| > 1} \frac{\Omega(x-y)}{|x-y|^{d+1}} g(y) \,dy
     \\
&= \lim_{\alpha \rightarrow d^{-}} \left(\int_{|x-y| \leq 1} \frac{\Omega(x-y)}{|x-y|^{\alpha+1}} [g(y)-g(x)] \,dy
+ \int_{|x-y| > 1} \frac{\Omega(x-y)}{|x-y|^{\alpha+1}} g(y) \,dy \right) \\
&= \lim_{\alpha \rightarrow {d^{-}}} \int_{\R^d} \frac{\Omega(x-y)}{|x-y|^{\alpha+1}} g(y)
\,dy \equiv 0 \text{ in } \mathbb{R}^d,
  \end{split}\end{equation}
  where it was used that $\Omega$ is odd in the third equality.

        Choose $\eta \in S(\mathbb{R}^d)$
      satisfying $\widehat{\eta} \equiv 1$ on $B(0,1)$, $\widehat{\eta} \geq 0$ in $\mathbb{R}^d$
      and $\widehat{\eta} \equiv 0$ outside $B(0,2)$. For $\kap>0$, define $\eta_{\kap}$ by $\widehat{\eta}_{\kap}=\widehat{\eta}
      \left(\frac{\cdot}{\kap}\right)$. Fix $\xi_0 \neq 0$. Since $m$ is smooth (see for example  Propostion 2.4.8 in \cite{G}) and does not vanish on $\mathbb{S}^{d-1}$, we find a component $m_j$ of $m$ and $\kap>0$  for which  $m_j(\xi)\neq0$    for every $\xi \in B(\xi_0,2\kap)$ and $0\notin B(\xi_0, 2\kap)$.

      Fix $\psi = \mathcal{F}^{-1}\widehat{\eta}_{\kap}(\cdot-\xi_0)$ (so that $0\notin \supp(\widehat{\psi})$).  For $\eps\in (0,1)$, set
      $$G_\varepsilon(x)= \int_{|x-y| > \varepsilon} \Delta \psi(y) \frac{\Omega_j(x-y)}{|x-y|^{d+1}}  \,dy.$$
      Then we may write $$G_{\eps}(x)= \int_{\eps<|x-y|\leq 1}[\Delta \psi(y)-\Delta\psi(x)]\frac{\Omega_j(x-y)}{|x-y|^{d+1}}\, dy+\int_{|x-y|>1}\Delta\psi(y)\frac{\Omega(x-y)}{|x-y|^{d+1}}\, dy.$$
      The second integral here is bounded in absolute value by a constant multiple of $\|\Delta\psi\|_{L^1(\R^d)}$.  On the other hand, the function $y\mapsto [\Delta\psi(y)-\Delta\psi(x)]\frac{\Omega_j(x-y)}{|x-y|^{d+1}}$ is bounded by a constant multiple of $\frac{1}{|x-y|^{d-1}}$ (which is locally integrable with respect to Lebesgue measure), so the first integral appearing in $G_{\eps}(x)$ is also bounded, and moreover, as $\eps\to 0^+$, $G_{\eps}$ converges uniformly to the function
      $$G(x)= \int_{|x-y|\leq 1}[\Delta \psi(y)-\Delta\psi(x)]\frac{\Omega_j(x-y)}{|x-y|^{d+1}}\, dy+\int_{|x-y|>1}\Delta\psi(y)\frac{\Omega_j(x-y)}{|x-y|^{d+1}}\, dy.$$
           
           \begin{claim}\label{gepsclaim}We claim that \begin{equation} \label{decay}|G_\varepsilon(x)| \lesssim \frac{1}{1+|x|^{d+2}} \text{   for every }x \in \mathbb{R}^d\text{ and }\eps\in (0, 1/2) .\end{equation}\end{claim}
           
       Assuming the claim we complete the proof of the theorem.   \\
       
      \noindent \textbf{1.} We first prove that $G \ast g \equiv 0$ in $\R^d$. 
       
       For this we want to use (\ref{star}). From the decay estimate (\ref{decay}) and the fact that $g$ is bounded, we appeal to the dominated convergence theorem to yield that
   $$\int_{\R^d} G(x-y) g(y) \,dy  = \lim_{\varepsilon \to 0}  \int_{\R^d} G_\varepsilon(x-y)  g(y) \,dy.$$
 But now, using (\ref{l1mc}) to justify applying Fubini's theorem,
  \begin{align*}
  &\lim_{\varepsilon \to 0}  \int_{\R^d} G_\varepsilon(x-y)  g(y) \,dy \\
   &  \stackrel{\text{Fubini}}{=} \lim_{\varepsilon \rightarrow 0} \iint\limits_{\{(y,z)\in \R^d\times\R^d:\varepsilon < |x-y-z| \leq 1\}}
  \Delta\psi(z)
  \frac{\Omega_j(x-y-z)}{|x-y-z|^{d+1}} g(y) \,dy dz \\
  & \; + \iint\limits_{\{(y,z)\in \R^d\times\R^d:|x-y-z|>1\}} \Delta\psi(z)
  \frac{\Omega_j(x-y-z)}{|x-y-z|^{d+1}} g(y) \,dy \,dz  \\
  & \stackrel{\Omega\text{ is odd}}{=}  \int_{\mathbb{R}^d} \Delta\psi(z) \Bigl[\int_{ |x-y-z| \leq 1}
  \frac{\Omega_j(x-y-z)}{|x-y-z|^{d+1}} [g(y) - g(x-z)] \,dy\Bigl] \,dz \\
 &  \qquad + \int_{\mathbb{R}^d} \Delta\psi(z)\Bigl[\int_{ |x-y-z|>1}
  \frac{\Omega_j(x-y-z)}{|x-y-z|^{d+1}} g(y) \,dy \Bigl]\,dz \stackrel{(\ref{star})}{=} 0,
\end{align*}
and so $G\ast g\equiv 0$ in $\R^d$.\\

   \noindent\textbf{2.}   The next step is to apply Theorem \ref{SW} to show that $\widehat{G}= m \widehat{\Delta \psi}$ in $\mathcal{S}'(\R^d)$. This will in particular show that $G\in \mathcal{S}(\R^d)$ ($0\notin \supp(\widehat{\psi})$).  
   
    To verify this formula, fix $f \in \mathcal{S}(\mathbb{R}^d)$.  The decay estimate (\ref{decay}) (recalling that $g\in L^{\infty}(\R^d)$) certainly ensures that we may apply the dominated convergence theorem to yield
       \begin{align*}
       \int_{\mathbb{R}^d} G(x) \widehat{f}(x) \,d x      &= \lim_{\varepsilon \rightarrow 0}
      \int_{\mathbb{R}^d} \widehat{f}(x)\Bigl[ \int_{|x-y|>\varepsilon} \Delta \psi(y) \frac{\Omega_j(x-y)}{|x-y|^{d+1}}       \,dy \Bigl]\,d x.
      \end{align*}
       For any $\varepsilon >0$,
      \begin{align*}
      \int_{\mathbb{R}^d} &\widehat{f}(x) \int_{|x-y|>\varepsilon} \Delta \psi(y) \frac{\Omega_j(x-y)}{|x-y|^{d+1}} \,dy      = \int_{|y|>\varepsilon} \frac{\Omega_j(y)}{|y|^{d+1}} (\Delta \psi(-\,\cdot\,) \ast \widehat{f} )(y)
      \,dy.
      \end{align*}
            Notice that $\Delta \psi (-\,\cdot\,) \ast \widehat{f}= \mathcal{F} (\widehat{\Delta \psi} \cdot f)$. So if we denote $h= \widehat{\Delta \psi} \cdot f$, then by applying Theorem \ref{SW}, we obtain
      $$ \operatorname{P.V.} \int \frac{\Omega_j(y/|y|)}{|y|^d}\widehat{h}(y)
      \,dy = \int_{\R^d} m_j(y) \widehat{\Delta\psi}(y) f(y)
      \,dy.$$

      Consequently $\widehat{G} = m_j \widehat{\Delta\psi}  \in \mathcal{S}'(\mathbb{R}^d),$ as claimed.\\

\noindent\textbf{3.} We next show that the support of $\widehat{g}$ is contained in $\{0\}$.  

To this end, note that we can express $\widehat{G}$ as
$$\widehat{G}(\xi)=b|\xi|^2 \widehat{\eta}_{\kap}(\xi - \xi_0) m_j(\xi), \text{ for some }b\in \mathbb{C}.$$
Hence $\widehat{G}(\xi)\neq 0$ in $B(\xi_0,t)$. Let $\varepsilon \in (0,\kap/2)$ and
consider the function $F\in \mathcal{S}(\R^d)$ given by
$$ \widehat{F} = \frac{\widehat{\eta}_{\varepsilon}(\xi-\xi_0)}{\widehat{G}(\xi)}.$$
Since
$$|G| \ast g(x) = \int_{\R^d} |G(x-y)| g(y) \,dy \lesssim \int_{\R^d} \frac{1}{1 + |x-y|^{d+2}} \,d\mu_c(y) \lesssim 1,$$
we achieve that $[|F| \ast (|G| \ast g)] (x) < \infty$ for every $x$.
Thus $(F \ast G) \ast g = F \ast ( G \ast g) \equiv 0$ in $\mathbb{R}^d.$
But considering that $F \ast G = \mathcal{F}^{-1}(\widehat{\eta}_{\varepsilon}(\cdot -
\xi_0))$,
we obtain $[ \mathcal{F}^{-1}(\widehat{\eta}_{\varepsilon}(\cdot -
\xi_0))] \ast g \equiv 0.$
So we deduce that $\widehat{g}$ vanishes in the ball $B(\xi_0, \varepsilon)$.
Since $\xi_0$ is arbitrary, $\supp(\widehat{g})\subset \{0\}$.\\

\noindent\textbf{4.} We now complete the proof of the theorem by showing that $\mu$ is the zero measure.   Since  $\supp(\widehat{g})\subset \{0\}$, we have that
$$ \mu_c = P m_d,$$
for some polynomial $P$.
If the polynomial is non-zero, there is a constant $c>0$ such that for all
sufficiently large $R$, $\mu_c(B(0,R))\geq cR^d$. But due to the power growth, $\mu_c(B(0,R)) \lesssim R^s$ for large $R>0$.
Hence $P\equiv  0$.  This then implies that $\mu_c$, and therefore $\mu$, is the zero measure.  The theorem is proved.
\end{proof}

We now return to supply the proof of Claim \ref{gepsclaim}.

\begin{proof}[Proof of Claim \ref{gepsclaim}]
 The observations in the paragraph prior to the statement of Claim \ref{gepsclaim} ensur that $G_{\eps}$ is bounded, so we may assume that $|x|>1$.  We write
      \begin{align*}
      G_\varepsilon(x)= & \int_{|y|>\varepsilon} \Delta \psi(x-y) \widehat{\eta}\left(\frac{4(y-x)}{|x|} \right)
      \frac{\Omega_j(y)}{|y|^{d+1}} \,dy \\
      &+ \int_{|y|>\varepsilon} \Delta \psi (x-y) \left[1 - \widehat{\eta}\left(\frac{4(y-x)}{|x|} \right)  \right]\frac{\Omega_j(y)}{|y|^{d+1}}  \,dy \\
      & = I +II.
      \end{align*}
      Notice that
      \begin{equation}\label{etasupp1}
      \widehat{\eta}\left(\frac{4(y-x)}{|x|} \right) K_j(y) \text{ is supported in }B(x,|x|/2).
      \end{equation}
Therefore, the support of the integrand in $I$ does not intersect $\{|y|\leq \eps\}$ and so we may integrate by parts to obtain
      $$|I|=\left| \int_{B(x, \frac{|x|}{2})}  \psi(x-y) \Delta_y \left[ \widehat{\eta}\left(\frac{4(y-x)}{|x|} \right) K_j(y) \right]
      \,dy\right|,$$
      where $K_j(x)=\frac{\Omega_j(x)}{|x|^{d+1}}$.
      But for $y \in B(x,\frac{|x|}{2})$, we
      have that
      $$ \left| \Delta_y \left[ \widehat{\eta}\left(\frac{4(y-x)}{|x|} \right) K_j(y) \right] \right|  \lesssim \frac{1}{|x|^{d+2}},$$
      and so
      $$\left|  \int_{\mathbb{R}^d}  \psi(x-y) \Delta_y \left[ \widehat{\eta}\left(\frac{4(y-x)}{|x|} \right) K_j(y) \right]
      \,dy \right| \lesssim \frac{1}{|x|^{d+2}} \|\psi \|_{L^1} \lesssim \frac{1}{|x|^{d+2}}.$$
      To deal with the term $II$, notice first that $y \mapsto 1 -  \widehat{\eta}\left(\frac{4(y-x)}{|x|} \right)$
      is supported in $\mathbb{R}^d \setminus B(x,\frac{|x|}{4})$, so we may write      \begin{align*}
       &  \left| \int_{\mathbb{R}^d \setminus B(0,\varepsilon)}
 \Delta_y(\psi(x-y)) \left[1 - \widehat{\eta}\left(\frac{4(y-x)}{|x|} \right)  \right] K_j(y) \,dy \right|
 \\
& \qquad  \leq \left|  \int_{B\left(0,\frac{|x|}{4}\right) \setminus B(0,\varepsilon)} \Delta_y \psi(x-y)     \left[1 - \widehat{\eta}\left(\frac{4(y-x)}{|x|} \right)  \right] K_j(y) \,dy \right|\\ &  \qquad \;+
  \left| \int_{\mathbb{R}^d \setminus (B\left(0,\frac{|x|}{4}\right) \cup B(x,\frac{|x|}{4}))}\Delta_y \psi(x-y)      \left[1 - \widehat{\eta}\left(\frac{4(y-x)}{|x|} \right)  \right] K_j(y) \,dy \right|\\&\qquad \; =\widetilde{I}+ \widetilde{II}.
      \end{align*}
      Now, since $\psi \in \mathcal{S}(\mathbb{R}^d)$, we have that
      $$| \Delta \psi (x-y)| +|\nabla \Delta\psi(x-y)|\leq \frac{C_n}{|x-y|^n}, $$  for every $n \in \mathbb{N}.$         Consider now $\widetilde{I}$.  Recalling (\ref{etasupp1}) we write
        \begin{equation*} \begin{split}
        \widetilde{I} &=\left|\int_{B\left(0,\frac{|x|}{4}\right)\setminus B(0,\varepsilon)} \Delta_y\psi(x-y) K_j(y) \,dy\right|\\&=\left| \int_{B\left(0,\frac{|x|}{4}\right)\setminus B(0,\varepsilon)} [\Delta_y\psi(x-y)-\Delta_y\psi(x)] K_j(y) \,dy\right|\\ & \lesssim \int_{B\left(0,\frac{|x|}{4}\right)} \frac{1}{|y|^{d-1}} \sup_{z \in B\left(0,\frac{|x|}{4}\right)} |\nabla \Delta \psi (x-z)| \,dy \lesssim \frac{C_n}{|x|^{n-1}}.
        \end{split}\end{equation*}
  Regarding $\widetilde{II}$, we notice that $|K_j(y)| \lesssim \frac{1}{|x|^d}$ on the domain of integration. Consequently, if $n>d$, then
  $$\widetilde{II} \lesssim \frac{1}{|x|^d} \int_{|x-y| > \frac{|x|}{4}} \frac{1}{|x-y|^n} \,dy \lesssim \frac{C_n}{|x|^n}.$$
  Putting $n=d+3$ yields the claimed estimate (\ref{decay}).\end{proof}

\begin{remark} \label{remark}
We make note that the Fourier condition in the theorem is sharp: let $d=2$ and $s \in (1,2)$. We consider the kernel $\Omega$:
$$\Omega(x)=x_1 \text{ for }x=(x_1,x_2)\in \R^2.$$
It is clear that the Fourier transform of the associated Principal Value distribution  vanishes on $\{\xi=(\xi_1,\xi_2)\in \R^2:\xi_1=0\}$.
We form the measure $\mu=m_1 \times \mathcal{H}_{|C}^{s-1},$
where $C \subset \mathbb{R}$ is a set with $\mathcal{H}^{s-1}(C)>0$ and $\mathcal{H}^{s-1}(C\cap (x-r,x+r))\leq r^{s-1}$ for every $x\in \R$ and $r>0$ (this can be accomplished with a standard Cantor set construction).  Then $\mu$ has power growth (i.e. $\mu(B(x,r)) \leq r^s$ for every $r>0$, $x \in \mathbb{R}^d$) and $\mu$ is $\Omega$-symmetric.
\end{remark}

\appendix
\section{\\From principal value Integral to small Local action}\label{PVappend}
In this section, we prove that the property $(\SLA)$ is a necessary condition for the almost everywhere existence of the principal value integral.

\begin{prop}\label{PVtoSLA}
Let $\mu$ be a measure satisfying $\overline{D}_{\mu,s}(x) < \infty$ for $\mu$-almost every $x \in \R^d$. If the principal value integral $$\lim_{\eps \to 0}\int_{|x-y|>\eps} \frac{\Omega(x-y)}{|x-y|^{s+1}} d\mu(y),$$ exists for $\mu$-almost everywhere $x \in \R^d$ then $\mu$ satisfies the property $(\SLA)$.
\end{prop}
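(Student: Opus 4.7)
The plan is to rewrite the expression in $(\SLA)$ so that it reduces to a cancellation in a weighted combination of the truncated singular integrals $T_\eps(x):=\int_{|x-y|>\eps}\frac{\Omega(x-y)}{|x-y|^{s+1}}\,d\mu(y)$, which by assumption converge to a limit $T_\mu(x)$ as $\eps\to 0^+$ for $\mu$-a.e.\ $x$. Setting $K(z)=\Omega(z)/|z|^{s+1}$ and $\chi(t)=t^{s+1}\eta^\tau(t)$, I have the identity
$$I_r(x):=\frac{1}{r^{s+1}}\int_{\R^d}\Omega(x-y)\eta^\tau\Bigl(\frac{|x-y|}{r}\Bigl)\,d\mu(y)=\int_{\R^d}K(x-y)\chi\Bigl(\frac{|x-y|}{r}\Bigl)\,d\mu(y),$$
and $\chi$ is a Lipschitz function supported in $[0,1]$ satisfying $\chi(0)=\chi(1)=0$. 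This final cancellation is the algebraic fact that ultimately drives the argument.

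I would fix $\delta\in(0,1-\tau)$ and split $I_r(x)=I_r^{\operatorname{in}}(x)+I_r^{\operatorname{out}}(x)$ according to whether $|x-y|\leq\delta r$ or $\delta r<|x-y|\leq r$. On the inner region $\eta^\tau(|x-y|/r)=1$, so using $|\Omega(x-y)|\lesssim|x-y|$ (from one-homogeneity) together with the bound $\mu(B(x,t))\leq M(x)t^s$ valid for small $t$ at $\mu$-a.e.\ $x$, I would estimate
$$|I_r^{\operatorname{in}}(x)|\leq\frac{\|\Omega\|_{\infty}(\delta r)\mu(B(x,\delta r))}{r^{s+1}}\lesssim M(x)\delta^{s+1}.$$

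For the outer piece, I would use the representation $\chi(t)=\chi(\delta)+\int_\delta^t\chi'(v)\,dv$, valid on $[\delta,1]$, and then apply Fubini's theorem (which is legitimate since $K(x-y)\chi'(v)$ is bounded on the bounded region $\{\delta r<|x-y|\leq r\}\times[\delta,1]$ with $\mu(B(x,r))\lesssim r^s$) to obtain
$$I_r^{\operatorname{out}}(x)=\chi(\delta)[T_{\delta r}(x)-T_r(x)]+\int_\delta^1\chi'(v)[T_{vr}(x)-T_r(x)]\,dv.$$
At any $x$ where the principal value exists and $\overline D_{\mu,s}(x)<\infty$, each bracketed difference tends to $0$ as $r\to 0^+$ for every $v\in[\delta,1]$, and is bounded by $2(|T_\mu(x)|+1)$ once $r$ is smaller than some $\eps_0(x)$; the dominated convergence theorem then gives $\lim_{r\to 0}I_r^{\operatorname{out}}(x)=0$.

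Combining the two estimates yields $\limsup_{r\to 0}|I_r(x)|\lesssim M(x)\delta^{s+1}$, and letting $\delta\to 0^+$ proves $(\SLA)$ at $x$. The main delicate point is the integration-by-parts step: by working with the differences $T_{vr}(x)-T_r(x)$ instead of the individual terms $T_{vr}(x)$, one exploits the cancellation $\chi(1)=0$ and avoids any issue with a possibly non-absolutely-convergent principal value at infinity, keeping all auxiliary integrals absolutely convergent.
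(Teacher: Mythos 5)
Your proof is correct, and it takes a somewhat different route than the paper. The paper first establishes the special case of the sharp cutoff, namely $\lim_{r\to 0}\tfrac{1}{r^{s+1}}\int_{B(0,r)}\Omega(y)\,d\mu(y)=0$, by decomposing $B(0,r)$ into geometric annuli $B(0,r_j)\setminus B(0,r_{j+1})$ with $r_j=(1-\varepsilon)^j r$, controlling the replacement of $r_j^{-(s+1)}$ by $|y|^{-(s+1)}$ via the density bound (their term $I$) and the sum of truncated PV pieces via the Cauchy criterion for the principal value (their term $II$); only afterward do they extend to a general Lipschitz cutoff $\psi$ by writing $\psi$ as the integral of $\psi'$ and applying the sharp-cutoff result inside. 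You instead fold the factor $r^{-(s+1)}$ into the cutoff by setting $\chi(t)=t^{s+1}\eta^{\tau}(t)$ and perform the absolute-continuity/Fubini step \emph{first}, directly expressing the outer piece as a superposition of differences $T_{vr}(x)-T_r(x)$; the inner piece you kill with the density bound, the outer with dominated convergence. Both proofs rest on the same two ingredients (density near the diagonal, PV cancellation elsewhere), but your organization avoids the explicit annular resummation and the two-stage reduction, and it keeps all intermediate integrals over a ball of radius $r$ so that no separate argument for absolute convergence at scale $\gg r$ is needed. One small cosmetic remark: the cancellation you attribute to $\chi(1)=0$ is really a consequence of $\chi$ being Lipschitz with compact support in $[0,1]$ (which forces $\chi(1)=0$), and what the argument actually uses is that $\supp\chi\subset[0,1]$ so the $y$-integral is confined to $B(x,r)$; the statement is harmless but slightly imprecise.
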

\begin{proof}
We assume that $\overline{D}_{\mu,s}(0)<\infty$, and the principal value integral exists at $0$, and will verify that for every $\psi\in \Lip_0([0,\infty))$ (a Lipschitz continuous function supported in a compact subset of $[0,\infty)$),
$$\lim_{r\to 0}\int_{B(0,r)}\Omega(y)\psi\Bigl(\frac{|y|}{r}\Bigl)d\mu(y)=0,$$
from which the result follows.    There exists  $k\in \mathbb{N}$ such that
 $ \frac{\mu(B(0,r))}{r^s} \leq k\text{ for every }r\leq \frac{1}{k}$.
 Fix $\eps, \delta\in (0,\tfrac{1}{2})$.   Appealing to the existence of the principal value integral, we choose $R_0\in (0,\tfrac{1}{k})$  such that for any $r_1,r_2 \in (0,R_0)$  we have that
 \begin{equation}\label{pv}\Big| \int_{r_1 \leq |y| < r_2} \frac{\Omega(y)}{|y|^{s+1}} \,d\mu(y) \Big| \leq \delta.\end{equation}
 Fix $r\in (0,R_0)$.  Set $r_j=(1-\varepsilon)^jr$, $j\geq 0$.  Our goal is to estimate
 $$\Bigl|\int_{B(0,r)} \frac{ \Omega(y)}{r^{s+1}} \,d\mu(y)\Bigl|,$$
 which, by the triangle inequality, is no greater than
 \begin{equation*} \begin{split}\Big| & \sum_{j=0}^{\infty}  (1-\varepsilon)^{j(s+1)}\int_{r_{j+1} \leq |y| < r_j} \Omega(y) \Big(\frac{1}{r_j^{s+1}}-\frac{1}{|y|^{s+1}} \Big) \,d\mu(y)\Big|\\
& \phantom{asdf} +  \Big| \sum_{j=0}^{\infty} (1-\varepsilon)^{j(s+1)} \int_{r_{j+1} \leq |y| < r_j} \frac{\Omega(y)}{|y|^{s+1}} \,d\mu(y) \Big|= I+II.
 \end{split}\end{equation*}
 Regarding $I$, by estimating the derivative of the function $y \mapsto \frac{1}{|y|^s}$, we observe that
 \begin{equation*} \begin{split}I &\lesssim  \varepsilon  \sum_j (1-\varepsilon)^{j(s+1)} \int_{r_{j+1} \leq |y| < r_j} |\Omega(y)| \frac{r_j}{r_{j+1}^{s+2}}\,d\mu(y) \\
 & \lesssim   \varepsilon \sum_j \frac{\mu(B(0, r_j)\backslash B(0, r_{j+1}))}{r^s}\lesssim \eps \frac{\mu(B(0,r))}{r^s} \lesssim \varepsilon (s+1)k. \end{split}\end{equation*}
 For $II$ we apply (\ref{pv}) to each of the integrals in the sum to infer that $II \lesssim  \frac{\delta}{\varepsilon}.$  Therefore
 $$ \Bigl| \int_{B(0,r)} \frac{\Omega(y)}{r^{s+1}} \,d\mu(y)\Bigl|\lesssim \eps k  +\frac{\delta}{\eps},$$
 whence $\lim_{r\to 0}\frac{1}{r^{s+1}}\int_{B(0,r)}\Omega(y)d\mu(y)=0$.

 Finally, since a function $\psi\in \Lip_0([0,\infty))$ is compactly supported and absolutely continuous, we may write
 $$ \int_{\R^d} \frac{\Omega(y)}{r^{s+1}} \psi\Big(\frac{|y|}{r}\Big) \,d\mu(y)= - \int_{0}^{\infty} \psi'(t) \Bigl[ \int_{B(0,tr)} \frac{\Omega(y)}{r^{s+1}} \,d\mu(y)\Bigl] \,d t.$$
 Since $\psi'$ is bounded with compact support, we infer that
 $$\lim_{r\to 0}\int_{\R^d}\frac{\Omega(y)}{r^{s+1}}\psi\Bigl(\frac{|y|}{r}\Bigl) \, d\mu(y)=0,
 $$
 as required.
 \end{proof}

\end{document}